\colorlet{purpleB70}{blue!70!red}
\colorlet{orangeR65}{red!65!yellow}
\definecolor{red2}{HTML}{d41173}
\definecolor{neongreen}{HTML}{1bf702}
\definecolor{radicalred}{HTML}{FF355E}
\definecolor{denim}{HTML}{1560BD}
\definecolor{darkcyan}{rgb}{0.0, 0.55, 0.55}
\definecolor{cilek}{HTML}{FF43A4}
\definecolor{mor}{HTML}{9F00C5}
\definecolor{phlox}{rgb}{0.87, 0.0, 1.0}
\definecolor{fluorescentpink}{HTML}{FF1493}
\definecolor{napiergreen}{rgb}{0.16, 0.5, 0.0}
\definecolor{kellygreen}{rgb}{0.3, 0.73, 0.09}
\definecolor{parisgreen}{HTML}{ 50C878 }
\definecolor{palatinateblue}{rgb}{0.15, 0.23, 0.89}
\definecolor{ceruleanblue}{rgb}{0.16, 0.32, 0.75}
\definecolor{brandeisblue}{rgb}{0.0, 0.44, 1.0}
\definecolor{KLMblue}{HTML}{0FC0FC}
\definecolor{cinnamon}{rgb}{0.82, 0.41, 0.12}
\definecolor{darkorange}{rgb}{1.0, 0.55, 0.0}
\definecolor{darktangerine}{rgb}{1.0, 0.66, 0.07}
\definecolor{deepcarrotorange}{rgb}{0.91, 0.41, 0.17}
\definecolor{internationalorange}{HTML}{FF4F00}
\definecolor{persimmon}{HTML}{EC5800}
\definecolor{pumpkin}{HTML}{FF7518}
\definecolor{darkred}{rgb}{1,0,0} 
\definecolor{darkgreen}{rgb}{0,0.7,0}
\definecolor{darkblue}{rgb}{0,0,1}
\def\reflb#1#2{\begingroup
    #2%
    \def\@currentlabel{#2}%
    \phantomsection\label{#1}\endgroup
}
\numberwithin{equation}{section}
\newtheorem{Theorem}{Theorem}
\numberwithin{Theorem}{section}
\newtheorem{TheoremX}{Theorem}
\newtheorem{CorollaryX}[TheoremX]{Corollary}
\newtheorem   {Lemma}[Theorem]{Lemma}
\newtheorem   {Proposition}[Theorem]{Proposition}
\newtheorem   {Corollary}[Theorem]{Corollary}
\theoremstyle {definition}
\newtheorem   {Definition}[Theorem]{Definition}
\theoremstyle {remark}
\newtheorem   {Remark}[Theorem]{Remark}
\newtheorem   {Example}[Theorem]{Example}
\def    \eps    {\epsilon}
\newcommand{\CA}{{\mathcal A}}
\newcommand{\CS}{{\mathcal S}}
\newcommand{\supp}{\operatorname{supp}}
\newcommand{\id}{{\mathit id}}
\newcommand{\const}{{\mathit const}}
\newcommand{\charr}{{\mathit char}\,}
\newcommand{\fa}{{\mathfrak a}}
\newcommand{\fb}{{\mathfrak b}}
\newcommand{\ty}{\tilde{y}}
\newcommand{\tJ}{\tilde{J}}
\newcommand{\hgamma}{\hat{\gamma}}
\newcommand{\tx}{\tilde{x}}
\newcommand{\tz}{\tilde{z}}
\newcommand{\tH}{\tilde{H}}
\newcommand{\CB}{{\mathcal B}}
\def    \F      {{\mathbb F}}
\def    \R      {{\mathbb R}}
\def    \Z      {{\mathbb Z}}
\def    \N      {{\mathbb N}}
\def    \CP     {{\mathbb C}{\mathbb P}}
\def    \12     {{\frac{1}{2}}}
\def    \p      {\partial}
\def    \SH     {\operatorname{SH}}
\def    \HF     {\operatorname{HF}}
\def    \H      {\operatorname{H}}
\newcommand    \htop  {\operatorname{h_{\scriptscriptstyle{top}}}}
\newcommand   \slope {\operatorname{slope}}
\newcommand   \rmax {r_{\max}}
\newcommand   \rmin {r_{\min}}
\newcommand   \WW {\widehat{W}}
\newcommand   \hU {\widehat{U}}
\newcommand   \fbfl {\frak{b}^{\scriptstyle{Fl}}}
\newcommand   \fbdyn {\frak{b}^{\scriptstyle{Dyn}}}
\newcommand   \hbardyn {\hbar^{\scriptstyle{Dyn}}}
\begin{document}








\title [Barcode Entropy of Reeb Flows]{On the Barcode Entropy of Reeb Flows}

\author[Erman \c C\. inel\. i]{Erman \c C\. inel\. i}
\author[Viktor Ginzburg]{Viktor L. Ginzburg}
\author[Ba\c sak G\"urel]{Ba\c sak Z. G\"urel}
\author[Marco Mazzucchelli]{Marco Mazzucchelli}


\address{E\c C: ETH Z\"urich, R\"amistrasse 101, 8092 Z\"urich,
  Switzerland} \email{erman.cineli@math.eth.ch}

\address{VG: Department of Mathematics, UC Santa Cruz, Santa
  Cruz, CA 95064, USA} \email{ginzburg@ucsc.edu}

\address{BG: Department of
  Mathematics, 
  UCF, Orlando, FL 32816, USA} \email{basak.gurel@ucf.edu}

\address{MM: CNRS, UMPA, \'Ecole Normale Sup\'erieure de Lyon, 69364
  Lyon, France} \email{marco.mazzucchelli@ens-lyon.fr}

\subjclass[2020]{53D40, 37B40, 37J12, 37J55}

\keywords{Periodic orbits, Reeb flows, Floer homology, topological
  entropy, barcode entropy, persistence modules}

\date{\today} 

\thanks{The work is partially supported by the NSF grants DMS-2304207
  (BG) and DMS-2304206 (VG), Simons Foundation grants 855299 (BG) and
  MP-TSM-00002529 (VG), ERC Starting Grant 851701 via a postdoctoral
  fellowship (E\c{C}), and the ANR grants CoSyDy, ANR-CE40-0014 and
  COSY, ANR-21-CE40-0002 (MM)}

\begin{abstract}
  In this paper we continue investigating connections between Floer
  theory and dynamics of Hamiltonian systems, focusing on the barcode
  entropy of Reeb flows. Barcode entropy is the exponential growth
  rate of the number of not-too-short bars in the Floer or symplectic
  homology persistence module. The key novel result is that the
  barcode entropy is bounded from below by the topological entropy of
  any hyperbolic invariant set. This, combined with the fact that the
  topological entropy bounds the barcode entropy from above,
  established by Fender, Lee and Sohn, implies that in dimension three
  the two types of entropy agree. The main new ingredient of the proof
  is a variant of the Crossing Energy Theorem for Reeb flows.
\end{abstract}

\maketitle

\vspace{-0.2in}


\tableofcontents

\section{Introduction and main results}
\label{sec:intro+results}

\subsection{Introduction}
\label{sec:intro}
In this paper we continue investigating connections between Floer
theory and dynamics of Hamiltonian systems, focusing on the relation
of barcode entropy to topological entropy for Reeb flows.

Barcode entropy is an invariant associated with the sequence of Floer
homology persistence modules for the iterates of a Hamiltonian
diffeomorphism or the symplectic homology persistence module for a
Reeb flow. In both cases it measures the exponential growth rate of
the number of not-so-short bars in the barcode. It is closely related
to the topological entropy of the underlying Hamiltonian system.

For compactly supported Hamiltonian diffeomorphisms
$\varphi\colon M\to M$, barcode entropy was introduced in
\cite{CGG:Entropy}. We showed there that the barcode entropy
$\hbar(\varphi)$ of $\varphi$ is bounded from above by the topological
entropy $\htop(\varphi)$ and conversely that
$\htop(\varphi|_K)\leq \hbar(\varphi)$ whenever $K$ is a (locally
maximal) compact hyperbolic invariant set of $\varphi$. As a
consequence, $\hbar(\varphi)=\htop(\varphi)$ when $M$ is a surface by
the results of Katok, \cite{Ka}. (However, this equality does not hold
in general in higher dimensions, \cite{Ci:counterexample}.) For
geodesic flows, barcode entropy was defined in \cite{GGM} where
similar inequalities were established. For the Reeb flow
$\varphi^t=\varphi^t_\alpha$ of a contact form $\alpha$ on the
boundary $M$ of a Liouville domain, the barcode entropy
$\hbar(\alpha)$ was introduced in \cite{FLS}. There, a contact version
of the first inequality was also proved:
$\hbar(\alpha)\leq \htop(\varphi^t)$. Here we establish an analogue of
the second inequality: $\htop(\varphi|_K)\leq \hbar(\varphi)$, where
again $K$ is a locally maximal compact hyperbolic set, and hence
$\hbar(\alpha)=\htop(\varphi^t)$ when $\dim M=3$ by the results from
\cite{LY, LS} extending Katok's work to flows on three-manifolds. In
particular, as in the Hamiltonian case, barcode entropy can and does
detect topological entropy coming from localized sources such as a
hyperbolic set contained, for maps, in a small ball or, for flows, in
a thin mapping torus.

There is, of course, an immense body of work connecting dynamics,
e.g., topological entropy, of a Hamiltonian system, broadly
understood, with features of the underlying variational principle,
e.g., Morse or Floer homology. What distinguishes our approach to the
question is that it does not rely on unconditional global (symplectic)
topological properties of the map coming from, say, the exponential
growth of the Floer or Morse homology, in turn, determined by the
(symplectic) isotopy class of the map or topology of the
phase/configuration space.

In the setting of compactly supported Hamiltonian diffeomorphisms of
symplectic manifolds, this comes for granted: such maps are
(Hamiltonian) isotopic to the identity and the Floer homology is
independent of the map. Hence there cannot be any global topological
or Floer homological growth. However, in the setting of geodesic or
Reeb flows there are more possibilities. For instance, the Morse or
Floer homology can grow exponentially fast, forcing the flow to have
positive topological entropy. This growth is also captured by barcode
entropy but the phenomenon has been quite well understood
independently of this concept.

For instance, connections between topological entropy of a geodesic
flow and topology of the underlying manifold have been studied in,
e.g., \cite{Di, Ka82, Pa}. A variety of generalizations of these
classical results to Reeb flows have been obtained in \cite{AASS,
  Al16, Al19, ACH, AM, ADMM, AP, MS}, relating topological entropy of
Reeb flows to their Floer theoretic invariants (e.g., symplectic or
contact homology). Again, the global contact topology of the
underlying manifold is central to these results but plays little role
in our approach.

The key technical ingredient of the proof of our main theorem is the
Crossing Energy Theorem for Reeb flows, roughly speaking asserting
that for an admissible Hamiltonian $H$, any Floer trajectory
asymptotic to a periodic orbit of its Hamiltonian flow $\varphi_H^t$
corresponding to a periodic orbit of $\varphi_\alpha^t |_K$ has energy
bounded from below by some constant $\sigma>0$ independent of the
orbit and its period. (An admissible Hamiltonian is specifically
tailored for recasting Reeb dynamics in Hamiltonian terms; in
particular, it is autonomous and its flow on every positive level is a
reparametrization of the Reeb flow.)  For locally maximal hyperbolic
sets of Hamiltonian diffeomorphisms, several variants of this theorem
were originally proved in \cite{CGG:Entropy, GG:hyperbolic, GG:PR}. At
the center of the argument is the observation that all circles
$u(s,\,\cdot)$ in a low energy Floer cylinder $u$ for an iterated
Hamiltonian diffeomorphism are $\eps$-pseudo-orbits with $\eps>0$
independent of the period. The proofs, however, do not directly
translate to admissible Hamiltonians. Indeed, one of the main
difficulties arising in the contact setting is that the invariant set
of $\varphi_H^t$ corresponding to a locally maximal hyperbolic set of
$\varphi^t_\alpha$ is neither hyperbolic nor locally maximal, while
both conditions are essential.

A proof of the Crossing Energy Theorem in the Hamiltonian setting for
$\CP^n$ using generating function was given in \cite{Al} and its
counterpart for geodesic flows, also using finite-dimensional
reduction, can be found in \cite{GGM}. In \cite{CGGM} the theorem was
proved for an isolated hyperbolic periodic orbit of a Reeb flow in a
setting fairly close to the one adopted here. Finally, a variant of
the theorem for holomorphic curves in the symplectization is
established in \cite{CGP} and a version with Lagrangian boundary
conditions is proved in \cite{Me:new}.

All known to date lower bounds on barcode entropy type invariants rely
on a combination of hyperbolicity and crossing energy bounds. However,
crossing energy theorems have other applications outside the subject
of barcode entropy, ranging from multiplicity results for periodic
orbits in a variety of settings to Le Calvez--Yoccoz type theorems to
lower bounds on the spectral norm; see \cite{Ba1, Ba2, CGG:Spectral,
  CGGM, GG:hyperbolic, GG:nc, GG:PR}.

\subsection{Main definitions and results}
\label{sec:results}

Let $(W, d\alpha)$ be a Liouville domain. We will also use the
notation $\alpha$ for the contact form $\alpha|_M$ on the boundary
$M=\p W$.

Fix a ground field $\F$, which we suppress in the notation, and denote
the (non-equivariant) filtered symplectic homology of $W$ over $\F$
for the action interval $[0,\, \tau)$ by
$\SH^\tau(\alpha)$. Throughout this paper, the grading of symplectic
homology plays no role and we view $\SH^a(\alpha)$ as an ungraded
vector space over $\F$.  We make no assumptions on the first Chern
class $c_1(TW)$.

Together with the natural maps
$\SH^{\tau_0}(\alpha)\to \SH^{\tau_1}(\alpha)$ for
$\tau_0\leq \tau_1$, the symplectic homology forms a persistence
module; see Sections \ref{sec:persistence} and \ref{sec:SH}.  For
$\eps>0$, we denote by $\fb_\eps(\alpha,\tau)$ or just
$\fb_\eps(\tau)$, when $\alpha$ is clear from the context, the number
of bars of length greater than $\eps$, beginning in the range
$[0,\, \tau)$ in the barcode $\CB(\alpha)$ of this persistence
module. This is an increasing function in $\tau$ and $1/\eps$, locally
constant as a function of $\tau$ in the complement to
$\CS(\alpha)\cup \{0\}$, where $\CS(\alpha)$ is the action spectrum of
$\alpha$.

The barcode entropy of $\alpha$, denoted by $\hbar(\alpha)$, measures
the exponential growth rate of $\fb_\eps(\tau)$ and is defined as
follows.

\begin{Definition}
  \label{def:barcode_entropy}
The \emph{$\eps$-barcode entropy} of $\alpha$ is 
\begin{equation}
  \label{eq:eps-entropy}
  \hbar_\eps(\alpha): =
  \limsup_{\tau\to\infty}\frac{\log^+ \fb_\eps(\tau)}{\tau},
\end{equation} 
where $\log$ is taken base 2, $\log 0=-\infty$ and
$\log^+:=\max\{0,\log\}$, and the \emph{barcode entropy} of $\alpha$
is
\begin{equation}
  \label{eq:entropy}
  \hbar(\alpha):=\lim_{\eps \to 0^+}\hbar_\eps(\alpha)\in [0,\, \infty].
\end{equation}
\end{Definition}
A few comments are due at this point. First of all, since
$\fb_\eps(\tau)$ is an increasing function of $\tau$, we can replace
the upper limit in \eqref{eq:eps-entropy} as $\tau\to \infty$ by the
upper limit over an increasing sequence $\tau_i\to\infty$ as long as
this sequence is not too sparse. Namely, it is not hard to see that
$$
\hbar_\eps(\alpha)
= \limsup_{i\to\infty}\frac{\log^+ b_\eps(\tau_i)}{\tau_i}
$$
whenever $\tau_i\to\infty$ and $\tau_{i+1}/\tau_i\to 1$; cf.\
Proposition \ref{prop:hbar-dyn-V} and Theorem
\ref{thm:two-defs}. Second, $\hbar_\eps(\alpha)$ increases as
$\eps \to 0^+$, and hence the limit in \eqref{eq:entropy} does
exist. Clearly,
$$
\hbar_\eps(\alpha)\leq \hbar(\alpha),
$$
and, as is easy to see, for any $a>0$,
$$
\hbar_\eps(a\alpha)=a^{-1}\hbar_\eps(\alpha)
\textrm{ and }
\hbar(a\alpha)= a^{-1}\hbar(\alpha).
$$

\begin{Remark}
  Hypothetically, $\hbar_\eps(\alpha)$ and $\hbar(\alpha)$ might
  depend on the entire Liouville domain $(W,d\alpha)$ rather than just
  the contact form $\alpha$ on $M=\p W$. However, we are not aware of
  any examples where this happens. (Corollary \ref{cor:C} below
  implies, in particular, that $\hbar(\alpha)$ is completely
  determined by $(M,\alpha)$ when $\dim M=3$.) Moreover, according to
  \cite[Sect.\ 4.2]{FLS}, $\hbar(\alpha)$ is independent of the
  filling at least when $c_1(TW)=0$ and we tend to think in general as
  long as $(M,\alpha)$ is the boundary of the Liouville domain and
  hence $\hbar(\alpha)$ is defined.
\end{Remark}

Denote by $\htop(\alpha)$ the topological entropy of the Reeb flow
$\varphi^t_\alpha$ of $\alpha$. The next three theorems relating
barcode entropy and topological entropy are contact counterparts of
similar results for Hamiltonian diffeomorphisms and geodesic flows:
see \cite{CGG:Entropy} and \cite{GGM}.

\begin{TheoremX}[\cite{FLS}]
  \label{thm:A}
  For any Liouville domain $(W,d\alpha)$, we have
  $$
  \hbar(\alpha)\leq \htop(\alpha).
  $$
\end{TheoremX}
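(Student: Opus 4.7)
The plan is to bound the number of long bars in the symplectic homology barcode by the number of short periodic Reeb orbits, then to invoke a Yomdin-type inequality relating the exponential growth of periodic orbits to topological entropy. For each $\tau > 0$, I would choose an autonomous admissible Hamiltonian $H = H_\tau$ on the symplectic completion $\WW = W \cup_M ([1,\infty) \times M)$ of the form $H = h(r)$ on the cylindrical end, with $h$ smooth, strictly convex, vanishing near $r = 1$, and with slope at infinity slightly larger than $\tau$. After a $C^\infty$-small perturbation of $\alpha$ making all periodic Reeb orbits of period at most $\tau$ nondegenerate, the filtered Floer complex $\CF^\tau(H)$ is finitely generated: its generators split into a fixed set of $C$ constant orbits in the interior of $W$ (from a Morse perturbation) and nonconstant $1$-periodic orbits of $X_H$ that are in bijection with periodic Reeb orbits on $M$ of period $h'(r) \leq \tau$. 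For a suitably chosen profile $h$, the action $r h'(r) - h(r)$ of each nonconstant Floer generator differs from the corresponding Reeb period by at most an arbitrarily small error.

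I would then appeal to the elementary fact from persistence theory that the total number of bars in the barcode of a finite-dimensional filtered chain complex is bounded by the number of generators. Applied to $\CF^\tau(H)$ and passing through the action--period correspondence, this yields
\[
\fb_\eps(\alpha, \tau) \;\leq\; C + N(\tau + \eps),
\]
where $N(T)$ counts periodic Reeb orbits on $M$ of period at most $T$, with each prime orbit of period $p$ counted $\lfloor T/p \rfloor$ times to account for iterates. Next, I would invoke a Yomdin-type inequality for smooth Reeb flows,
\[
\limsup_{T \to \infty} \frac{\log N(T)}{T} \;\leq\; \htop(\varphi^t_\alpha),
\]
which is the flow counterpart of Yomdin's celebrated bound on the exponential growth rate of periodic points of a $C^\infty$ diffeomorphism; it follows by applying Yomdin's theorem to the time-one map $\varphi^1_\alpha$ combined with a pigeonhole argument matching periodic orbits of the flow to periodic points of the time-one map. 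Together these give $\hbar_\eps(\alpha) \leq \htop(\alpha)$ for every $\eps > 0$, and letting $\eps \to 0^+$ completes the argument.

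The main obstacle I foresee is the Yomdin step: Yomdin's theorem in its original form concerns periodic points of a $C^\infty$ diffeomorphism, and promoting it to a bound on periodic \emph{orbits} of a flow requires either a careful discretization of time or an independent argument through Gromov's volume-growth framework. Two secondary technical issues also warrant attention. First, one must verify that $\fb_\eps(\alpha, \tau)$ is appropriately upper semicontinuous under $C^\infty$-small perturbations of $\alpha$, while $\htop(\alpha)$ is lower semicontinuous in the same topology, so that replacing $\alpha$ by a nondegenerate approximation does not break the inequality. Second, because a single admissible $H$ only sees Reeb orbits up to a period determined by its slope, the slope must be chosen depending on $\tau$, and one must verify that the resulting comparison is compatible with the persistence structure of $\SH^\bullet(\alpha)$ via the naturality of the continuation maps.
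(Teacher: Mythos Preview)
Your proposal has a genuine gap at the Yomdin step. The inequality you invoke,
\[
\limsup_{T\to\infty}\frac{\log N(T)}{T}\;\leq\;\htop(\varphi^t_\alpha),
\]
is not a theorem, and nothing of the sort is due to Yomdin. Yomdin's theorem bounds topological entropy \emph{from below} by the volume growth rate of iterated submanifolds; it says nothing about counts of periodic orbits or periodic points. In fact, by the results of Kaloshin \cite{Kal} and Asaoka \cite{As} (both cited in this paper), the number of periodic points of a $C^\infty$ diffeomorphism---and likewise the number of periodic orbits of a $C^\infty$ flow---can grow super-exponentially, with no relation to the topological entropy. So the chain of inequalities you propose breaks exactly where you feared; it is not a technical nuisance but an outright obstruction, and the crude bound ``number of bars $\leq$ number of generators'' is too weak to yield Theorem~\ref{thm:A}.

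The paper itself does not prove Theorem~\ref{thm:A} but refers to \cite{FLS} and, in Remark~\ref{rmk:ThmA}, sketches an alternative route via \cite[Thm.~A]{CGG:Entropy}. That argument avoids counting periodic orbits altogether. Instead it uses the \emph{tomograph construction}: one slices the shell $\{1\leq r\leq\rmax\}$ by a family of Lagrangians and bounds the number of long bars in the Floer barcode by Lagrangian intersection numbers, which are in turn controlled by the \emph{volume growth} of the Lagrangians under iteration of $\varphi_H$. At that point the genuine Yomdin theorem (volume growth $\leq$ topological entropy for $C^\infty$ maps) applies and gives
\[
\hbardyn(H)\leq\htop\big(\varphi_H|_{\{1\leq r\leq\rmax\}}\big)=a\,\htop(\alpha).
\]
The essential difference from your outline is that volume growth, unlike periodic-orbit growth, is always dominated by entropy in the smooth category.
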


This theorem is originally proved in \cite{FLS}. (Strictly speaking,
the theorem is stated there under the additional conditions that
$c_1(TW)=0$. However, this condition appears to be immaterial for the
argument.) We will comment on the proof in Remark \ref{rmk:ThmA}. Note
that $\hbar(\alpha)<\infty$ by Theorem \ref{thm:A}.

The next theorem, which is the main new result of this paper, shows
that the barcode entropy can be positive and is related to the
hyperbolic invariant sets of $\varphi^t_\alpha$. We refer the reader
to, e.g., \cite{KH} for the definition of such sets and Section
\ref{sec:cross-energy+pf} for a further discussion.

\begin{TheoremX}
  \label{thm:B}
  Let $K$ be a compact hyperbolic invariant set of the Reeb
  flow $\varphi^t_\alpha$. Then
  $$
  \hbar(\alpha)\geq \htop(K),
  $$
  where we set $\htop(K):=\htop(\varphi^t_\alpha|_K)$.
\end{TheoremX}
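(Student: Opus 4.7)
The plan is to adapt the strategy of \cite{CGG:Entropy} (for Hamiltonian diffeomorphisms) and \cite{GGM} (for geodesic flows) to the contact setting, with the Crossing Energy Theorem for Reeb flows playing the role of its Hamiltonian or geodesic analogues. The argument has three pieces: extracting many quantitatively separated periodic Reeb orbits inside $K$, translating them into generators for a suitable admissible Hamiltonian, and showing via the Crossing Energy Theorem that they contribute distinct long bars to the persistence module.

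First, I would invoke the variational characterization of topological entropy together with the hyperbolicity of $K$. For any $\delta>0$, one obtains $\eps_0>0$ and a sequence $\tau_i\to\infty$, which can be arranged to satisfy $\tau_{i+1}/\tau_i\to 1$, such that $\varphi^t_\alpha|_K$ admits at least $N_i=\exp\!\bigl((\htop(K)-\delta)\tau_i\bigr)$ periodic Reeb orbits of period at most $\tau_i$ that are pairwise $(\tau_i,\eps_0)$-separated in the $C^0$-sense. This is the standard consequence of hyperbolicity (see \cite{KH}); no horseshoe theorem is needed here, since only the growth rate of separated periodic orbits enters.

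Next I would transfer these orbits to the Hamiltonian side. Choose an autonomous admissible Hamiltonian $H=h(r)$ on the completion $\WW$, linear in $r$ near infinity with sufficiently large slope, so that on each level $\{r=r_0\}$ inside the slope range the Hamiltonian flow $\varphi_H^t$ reparametrizes $\varphi^t_\alpha$. Each periodic Reeb orbit of period $T\le \tau_i$ from the previous step then lifts to a $1$-periodic orbit of $\varphi_H^t$ on the level where $h'(r)=T$, with Hamiltonian action approximating $T$ up to a controlled error, and the $\eps_0$-separation on $K$ persists as a $C^0$-separation of the corresponding loops in $\WW$.

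The main obstacle, and the heart of the argument, is to guarantee that these $N_i$ generators produce $N_i$ distinct bars of a uniform minimum length. This is the content of the Crossing Energy Theorem for Reeb flows, the key new ingredient of the paper: there exists $\sigma>0$, independent of the orbit and its period, such that any Floer cylinder for $H$ asymptotic to a $1$-periodic orbit arising from $K$ has energy at least $\sigma$. The difficulty compared with the purely Hamiltonian case is that the lift of $K$ inside $\WW$ is neither hyperbolic (it has a degenerate direction tangent to the Reeb vector field and a flat $r$-direction) nor locally maximal (nearby invariant loops of $\varphi_H^t$ correspond to Reeb orbits outside $K$). The pseudo-orbit argument bounding the crossing energy thus has to be implemented transversally to the Reeb and radial directions, which is precisely what the paper's version of the theorem accomplishes.

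Granted the Crossing Energy Theorem, the conclusion follows by a persistence-module count paralleling \cite{CGG:Entropy}. For any $\eps<\sigma$, the $C^0$-separation of the $N_i$ Hamiltonian orbits combined with the crossing energy bound forces each of them to support a bar of length at least $\eps$ beginning in the action window $[0,\tau_i+c)$ for some constant $c$ independent of $i$. Hence $\fb_\eps(\alpha,\tau_i+c)\ge N_i$, up to a subexponential correction coming from orbits outside $K$ which can be absorbed by a generic choice of $h$. Taking logarithms, dividing by $\tau_i$, and using $\tau_{i+1}/\tau_i\to 1$ yields $\hbar_\eps(\alpha)\ge \htop(K)-\delta$; letting $\eps\to 0^+$ and $\delta\to 0^+$ then gives $\hbar(\alpha)\ge \htop(K)$.
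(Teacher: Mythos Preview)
Your outline follows the same overall strategy as the paper, but two genuine gaps would prevent it from closing as written.

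First, you explicitly dispense with the horseshoe/locally-maximal reduction, but the Crossing Energy Theorem as proved in the paper (Theorem~\ref{thm:CE}) requires $K$ to be both hyperbolic \emph{and} locally maximal. Part~\ref{CE-P1} of the refined version (Theorem~\ref{thm:CE2}) uses local maximality to trap pseudo-orbits, and Part~\ref{CE-P2} uses hyperbolicity via the Shadowing Lemma; neither hypothesis can be dropped. The paper therefore first replaces $K$ by a locally maximal hyperbolic subset $K'$ with $\htop(K')\ge\htop(K)-2\delta$, invoking the Lian--Young theorem \cite{LY}. Your claim that ``no horseshoe theorem is needed here, since only the growth rate of separated periodic orbits enters'' is also off on its own terms: the equality of periodic-orbit growth and $\htop$ (which you implicitly use) is standard only for locally maximal hyperbolic sets, and your $(\tau_i,\eps_0)$-separation plays no role once the crossing energy bound is in hand.

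Second, and more substantively, you do not handle the level constraint in the Crossing Energy Theorem. The uniform bound $E(u)\ge\sigma$ applies only to orbits $\tz=(z,r^*)$ with $r^*$ in a fixed compact interval $I=[r_-,r_+]\subset(1,\rmax)$; equivalently, only to Reeb orbits with period $T\in[sh'(r_-),\,sh'(r_+)]$. A Reeb orbit of period $T\le\tau_i$ lifts to level $r^*$ with $sh'(r^*)=T$, so orbits with small $T$ land near $r=1$ and orbits with $T$ near the slope land near $r=\rmax$, outside $I$ in either case. The paper's Step~3 shows that the count $p^H(s)$ of orbits in the admissible period window $(a_-s,\,a_+s]$ still grows at rate at least $h'(r_+)\cdot\htop(K)$, via the elementary inequality $p(a_+s)\le 2\max\{p(a_+s)-p(a_-s),\,p(a_-s)\}$; one then lets $h'(r_+)\nearrow 1$. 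Your sentence ``up to a subexponential correction coming from orbits outside $K$ which can be absorbed by a generic choice of $h$'' misidentifies the issue: the problematic orbits are those \emph{inside} $K$ whose period falls outside the window, and absorbing them requires the counting argument just described, not a genericity of $h$.
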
  

Next, recall that when $\dim M=3$,
$$
\htop(\alpha)=\sup_K \htop(K), 
$$
where $K$ ranges over all hyperbolic invariant sets, \cite{LY,
  LS}. (This is a generalization to flows on three-manifolds of a
theorem originally proved in \cite{Ka} for diffeomorphisms of
surfaces.)  Combining this fact with Theorems \ref{thm:A} and
\ref{thm:B}, we obtain the following.

\begin{CorollaryX}
  \label{cor:C}
  Assume that $\dim M=3$. Then
  $
  \hbar(\alpha)= \htop(\alpha).
  $
\end{CorollaryX}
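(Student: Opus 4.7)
The plan is short: combine Theorem~\ref{thm:A}, Theorem~\ref{thm:B}, and the Katok-type variational principle for three-dimensional flows of Lima--Yang and Lima--Sarig cited in the excerpt. Since the corollary is a formal consequence of statements already on the table, essentially all of the work consists of assembling them in the right order.

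First I would record the upper bound from Theorem~\ref{thm:A},
$$\hbar(\alpha)\leq \htop(\alpha),$$
which holds without any restriction on $\dim M$. The content of the corollary therefore lies in the reverse inequality, and it is here that the hypothesis $\dim M = 3$ is used. I would then invoke the cited identity
$$\htop(\alpha)=\sup_K \htop(K),$$
where the supremum is taken over all compact hyperbolic invariant sets $K$ of the Reeb flow $\varphi^t_\alpha$; this is the flow-theoretic extension of Katok's theorem obtained in \cite{LY, LS} and valid specifically in dimension three. For each such $K$, Theorem~\ref{thm:B} gives $\hbar(\alpha)\geq \htop(K)$, and since the left-hand side does not depend on $K$, passing to the supremum on the right yields $\hbar(\alpha)\geq \htop(\alpha)$. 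Combined with Theorem~\ref{thm:A}, this establishes the equality.

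The ``hard parts'' of the argument do not live in the corollary itself but in its inputs: the sharp variational principle of \cite{LY, LS}, which is specific to three dimensions and fails in higher-dimensional settings (cf.\ \cite{Ci:counterexample}), and Theorem~\ref{thm:B}, whose proof rests on the Crossing Energy Theorem for Reeb flows described in the introduction. Granted these ingredients, the corollary follows by the two-line formal chain above, and no further obstacle arises; the only thing to double-check is that Theorem~\ref{thm:B} applies to \emph{every} compact hyperbolic invariant set (not just to locally maximal ones), so that the supremum in the Lima--Yang/Lima--Sarig formula can be absorbed without loss.
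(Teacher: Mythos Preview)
Your proposal is correct and matches the paper's argument essentially verbatim: the paper derives Corollary~\ref{cor:C} by combining Theorem~\ref{thm:A}, Theorem~\ref{thm:B}, and the identity $\htop(\alpha)=\sup_K \htop(K)$ from \cite{LY, LS} in exactly the way you describe. Your closing remark is also apt---Theorem~\ref{thm:B} is stated for arbitrary compact hyperbolic invariant sets, with the reduction to locally maximal ones handled internally in its proof, so the supremum goes through without further work.
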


This corollary is a Reeb counterpart of a similar result for
Hamiltonian diffeomorphisms of surfaces; see \cite[Thm.\
C]{CGG:Entropy}. The latter theorem does not generalize to higher
dimensions as the counterexamples constructed in
\cite{Ci:counterexample} show. While this construction does not
readily extend to Reeb flows, we do not expect Corollary \ref{cor:C}
to hold in higher dimensions either.

Furthermore, in dimension three $\htop(\alpha)$ is $C^0$
lower-semicontinuous in $\alpha$ at a $C^1$-open and dense set in the
space of all contact forms on $M$ as is proved in \cite{ADMP}; see
also \cite{ADMM}. By Corollary \ref{cor:C}, the same is true for
$\hbar(\alpha)$ whenever $M$ bounds a Liouville domain.

\begin{Remark}[Other types of barcode entropy]
  The barcode entropy in the Hamiltonian setting and for geodesic
  flows also has a relative counterpart associated with the filtered
  Floer or Morse homology for Lagrangian or geodesic chords,
  \cite{CGG:Entropy} and \cite[Sec.\ 4.4]{GGM}. A variant of relative
  barcode entropy for Reeb flows can also be defined via wrapped Floer
  homology and analogues of Theorems \ref{thm:A} and \ref{thm:B} have
  been recently proved in this setting, \cite{Fe24, Fe25}.

  Furthermore, yet two different versions of barcode entropy, both
  introduced in \cite{CGG:Growth} in the Hamiltonian framework, can
  also be defined in the Reeb setting and again we expect the above
  results to hold for them. These are sequential barcode entropy and
  total persistence entropy. Both entropies have properties similar to
  barcode entropy and, by construction, bound the ordinary barcode
  entropy from above. Hence, in both cases, Theorem \ref{thm:B}
  follows from its counterpart for the ordinary barcode entropy. In
  the Hamiltonian setting, Theorem \ref{thm:A} for these variants of
  barcode entropy was proved in \cite{CGG:Growth}.  We conjecture that
  these refinements of Theorem \ref{thm:A}, and hence Corollary
  \ref{cor:C}, also hold in the contact setting.

  For the sake of brevity we do not consider here these
  generalizations or modifications of barcode entropy, focusing
  instead on the proof of Theorem \ref{thm:B}.
\end{Remark}  

\begin{Remark}[Barcode entropy for geodesic flows]
  As we have pointed out in the introduction, barcode entropy for
  geodesic flows was defined in \cite{GGM}, where the analogues of
  Theorems \ref{thm:A} and \ref{thm:B} and Corollary \ref{cor:C} were
  also proved. In that specific case, the barcode entropy is equal to
  the barcode entropy considered here. This is essentially a
  consequence of the equality of the filtered Morse homology and the
  symplectic homology, although some attention needs to be paid to the
  definition of the latter; see \cite{AS, SW, Vi}, \cite[Rmk.\
  4.5]{GGM} and also \cite[Rmk.\ 4.10]{FLS}.
\end{Remark}

In general, the barcode growth reflects various aspects of dynamics
ranging from topological entropy as examined, for instance, in this
paper to complete integrability; see \cite{BG}.

The paper is organized as follows. In Section \ref{sec:prelim} we set
our conventions and notation and also discuss the class of
(semi-)admissible Hamiltonians used throughout the paper. The relevant
facts from Floer theory are assembled in Section \ref{sec:Floer}. In
Section \ref{sec:def-rev} we revisit the definition of barcode
entropy, and reformulate it in a way more suitable for dynamics
applications and prove equivalence of the definitions.  We derive
Theorem \ref{thm:B} from the Crossing Energy Theorem in Section
\ref{sec:cross-energy+pf}, which is then proved in
Section~\ref{sec:cross}.


\section{Preliminaries, conventions, and notation}
\label{sec:prelim}

\subsection{Persistence modules}
\label{sec:persistence}
Persistence modules play a central role in the definition of barcode
entropy. In this section we define the class of persistence modules
suitable for our goals and briefly touch upon their properties. We
refer the reader to \cite{PRSZ} for a general introduction to
persistence modules, their applications to geometry and analysis and
further references, although the class of modules they consider is
somewhat more narrow than the one we deal with here, and also to
\cite{BV, CB} for some of the more general results.

Fix a field $\F$ which we will suppress in the notation. Recall that a
\emph{persistence module} $(V,\pi)$ is a family of vector spaces $V_s$
over $\F$ parametrized by $s\in \R$ together with a functorial family
$\pi$ of structure maps. These are linear maps
$\pi_{st}\colon V_s\to V_t$, where $s\leq t$ and functoriality is
understood as that $\pi_{sr}=\pi_{tr}\pi_{st}$ whenever
$s\leq t\leq r$ and $\pi_{ss}=\id$. In what follows we often suppress
$\pi$ in the notation and simply refer to $(V,\pi)$ as $V$.  In such a
general form the concept is not particularly useful and usually one
imposes additional conditions on the spaces $V_t$ and the structure
maps $\pi_{st}$. These conditions vary depending on the context. Below
we spell out the framework most suitable for our purposes.

Namely, we require that there is a closed, bounded from below, nowhere
dense subset $\CS\subset \R$, which is called the \emph{spectrum} of $V$,
and the following four conditions are met:
\begin{itemize}

\item[\reflb{PM1}{\rm{(i)}}] The persistence module $V$ is
  \emph{locally constant} outside $\CS$, i.e., $\pi_{st}$ is an
  isomorphism whenever $s\leq t$ are in the same connected component
  of $\R\setminus \CS$.

\item[\reflb{PM2}{\rm{(ii)}}] The persistence module $V$ is
  \emph{$q$-tame}: $\pi_{st}$ has finite rank for all $s<t$.
 
\item[\reflb{PM3}{\rm{(iii)}}] \emph{Left-semicontinuity}: For all
  $t\in\R$,
  \begin{equation}
    \label{eq:semi-cont}
    V_t=\varinjlim_{s<t} V_s.
  \end{equation}

\item[\reflb{PM4}{\rm{(iv)}}] \emph{Lower bound}: $V_s=0$ when $s<s_0$
  for some $s_0\in\R$. (Throughout the paper we will assume that
  $s_0=0$.)

\end{itemize}

A few comments on this definition are in order. First, note that as a
consequence of \ref{PM1} and \ref{PM2}, $V_s$ is finite-dimensional
and \ref{PM3} is automatically satisfied when $s\not\in
\CS$. Furthermore, in several instances which are of interest to us,
$V_s$ is naturally defined only for $s\not\in\R$; then the definition
is extended to all $s\in\R$ by \eqref{eq:semi-cont}. By \ref{PM4}, we
can always assume that $s_0\leq \inf \CS$, i.e., $\CS$ is bounded from
below. We emphasize, however, that $\CS$ is not assumed to be bounded
from above and it is actually not in many examples we are interested
in. In what follows it will sometimes be convenient to include
$s=\infty$ by setting
$$
V_\infty=\varinjlim_{s\to\infty} V_s.
$$
Finally, in all examples we encounter here $\CS$ has zero measure and,
in fact, zero Hausdorff dimension, but this fact is never used in the
paper.

A basic example motivating requirements \ref{PM1}--\ref{PM4} is that
of the sublevel homology of a smooth function.

\begin{Example}[Homology of sublevels] Let $M$ be a smooth manifold
  and $f\colon M\to \R$ be a proper smooth function bounded from
  below. Set $V_s:=H_*\big(\{f<s\};\F\big)$ with the structure maps
  induced by inclusions. No other requirements are imposed on $f$,
  e.g., $f$ need not be Morse. However, it is not hard to see that
  conditions \ref{PM1}--\ref{PM4} are met with $\CS$ being the set of
  critical values of $f$. We note that one can have $\dim V_s=\infty$
  for $s\in\CS$ already when $M=S^1$, unless $f$ meets some additional
  conditions on $f$, e.g., that $f$ is real analytic or the critical
  points of $f$ are isolated.
\end{Example}

Recall furthermore that an \emph{interval persistence module}
$\F_{(a,\,b]}$, where $-\infty<a<b\leq \infty$, is defined by setting
$$
V_s:=\begin{cases}
  \F & \text{ when } s\in (a,\,b],\\
  0 & \text{ when } s \not\in (a,\,b],
\end{cases}
$$
and $\pi_{st}=\id$ if $a<s\leq t\leq b$ and $\pi_{st}=0$
otherwise. Interval modules are examples of simple persistence
modules, i.e., persistence modules that cannot be decomposed as a
(non-trivial) direct sum of other persistence modules.

A key fact that we will use in the paper is the normal form or
structure theorem asserting that every persistence module meeting the
above conditions can be decomposed as a direct sum of a countable
collection (i.e., a countable multiset) of interval persistence
modules. Moreover, this decomposition is unique up to re-ordering of
the sum. (In fact, conditions \ref{PM1}--\ref{PM4} are far from
optimal and can be considerably relaxed.) We refer the reader
\cite[Thm.\ 3.8]{BV} for a proof of this theorem for the class of
persistence modules considered here and further references, and also,
e.g., to \cite{CZCG, CB, ZC} for previous or related results.

This multiset $\CB(V)$ of intervals entering this decomposition is
referred to as the \emph{barcode} of $V$ and the intervals occurring
in $\CB(V)$ as \emph{bars}. For $\eps>0$ we denote by $\fb_\eps(V,s)$
or just $\fb_\eps(s)$ the number of bars $(a,\,b]$ in $\CB(V)$ with
$a<s$ of length $b-a> \eps$, counted with multiplicity. This is the
only numerical invariant of persistence modules used in this paper.
It is not hard to show that $\fb_\eps(s)<\infty$ for all $\eps>0$ and
$s<\infty$ under our conditions on $V$ (see Remark \ref{rmk:approx}
below), even though the total number of bars beginning below $s$ can
be infinite.

\begin{Remark}[Locally finite approximations]
  \label{rmk:approx}
  Every persistence module $V$ as above can be approximated with
  respect to the interleaving topology (see, e.g., \cite[Sec.\
    4.1]{BG}) by locally finite persistence modules $V'$.  The
  construction of $V'$ amounts to throwing away short bars and then
  adjusting $V_s$ for $s\in\CS$. Alternatively, in our case one can
  simply perturb the Hamiltonians or contact forms to ensure
  non-degeneracy. Then the truncations of $\CB(V)$ and $\CB(V')$
  are close with respect to the bottleneck distance, and for any
  $\delta>0$
  $$
  \fb_{\eps-\delta}(V',s)\geq \fb_{\eps}(V,s)\geq
  \fb_{\eps+\delta}(V',s)
  $$
  when $V'$ is sufficiently close to $V$.  Utilizing this fact, we
  could have worked with a more narrow class of locally finite
  persistence modules and used small perturbations to define
  $\fb_\eps(V,s)$. This is essentially the approach taken in
  \cite{CGG:Entropy}. However, here we find working with a broader
  class of persistence modules more convenient.
\end{Remark}

\subsection{Semi-admissible Hamiltonians, periodic orbits and the
  action functional}
\label{sec:setting}
In this section we spell out our conventions and notation on the
symplectic dynamics side, which are essentially identical to the ones
used in \cite{CGGM, GG:LS}, and also recall several elementary
properties of (semi-)admissible Hamiltonians to be used later.

Let, as in Section \ref{sec:results}, $\alpha$ be the contact form on
the boundary $M=\p W$ of a Liouville domain $W^{2n\geq 4}$. We will
also use the same notation $\alpha$ for a primitive of the symplectic
form $\omega$ on $W$. The grading of Floer or symplectic homology is
inessential for our purposes and we make no assumptions on $c_1(TW)$.
As usual, denote by $\WW$ the symplectic completion of $W$, i.e.,
$$
\WW=W\cup_M M\times [1,\,\infty)
$$
with the symplectic form $\omega=d\alpha$ extended from $W$ to
$M\times [1,\infty)$ as
$$
\omega := d(r\alpha),
$$
where $r$ is the coordinate on $[1,\,\infty)$. Sometimes it is
convenient to have the function $r$ also defined on a collar of
$M=\p W$ in $W$. Thus we can think of $\WW$ as the union of $W$ and
$M\times [1-\eta,\,\infty)$ for small $\eta>0$ with
$M\times [1-\eta,\, 1]$ lying in $W$ and the symplectic form given by
the same formula.

Unless specifically stated otherwise, most of the Hamiltonians
$H\colon \WW\to \R$ considered in this paper are constant on $W$ and
depend only on $r$ outside $W$, i.e., $H=h(r)$ on
$M\times [1,\,\infty)$, where the $C^\infty$-smooth function
$h\colon [1,\,\infty)\to \R$ is required to meet the following three
conditions:
\begin{itemize}
\item $h$ is strictly monotone increasing;
\item $h$ is convex, i.e., $h''\geq 0$, and $h''>0$ on $(1,\, \rmax)$
  for some $\rmax>1$ depending on~$h$;
\item $h(r)$ is linear, i.e., $h(r)=ar-c$, when $r\geq \rmax$.
\end{itemize}
In other words, the function $h$ changes from a constant on $W$ to
convex in $r$ on $M\times [1,\, \rmax]$, and strictly convex on the
interior, to linear in $r$ on $M\times [\rmax,\, \infty)$.

We will refer to $a$ as the \emph{slope} of $H$ (or $h$) and write
$a=\slope(H)$. The slope is often, but not always, assumed to be
outside the action spectrum of $\alpha$, i.e.,
$a\not\in\CS(\alpha)$. We call $H$ \emph{admissible} if
$H|_W=\const<0$ and \emph{semi-admissible} when $H|_W\equiv 0$. (This
terminology differs from the standard usage, and we emphasize that
\emph{admissible Hamiltonians are not semi-admissible}.) When $H$
satisfies only the last of the three conditions, we call it
\emph{linear at infinity}.

The difference between admissible and semi-admissible Hamiltonians is
just an additive constant: $H- H|_W$ is semi-admissible when $H$ is
admissible. Hence the two Hamiltonians have the same filtered Floer
homology up to an action shift. For our purposes, semi-admissible
Hamiltonians are notably more suitable due to the $H|_W\equiv 0$
normalization.

The Hamiltonian vector field $X_H$ is determined by the condition
$$
\omega(X_H,\, \cdot)=-dH,
$$
and, on $M\times [1,\,\infty)$, 
$$
X_H=h'(r) R_\alpha,
$$
where $R_\alpha$ is the Reeb vector field. We denote the Hamiltonian
flow of $H$ by $\varphi_H^t$, the Reeb flow of $\alpha$ by
$\varphi_\alpha^t$, where $t\in \R$, and the Hamiltonian
diffeomorphism generated by $H$ by $\varphi_H:=\varphi_H^1$.

Throughout the paper, by a \emph{$\tau$-periodic orbit} of $H$ we will
mean one of several closely related but distinct objects. It can be a
$\tau$-periodic orbit of $\varphi_H$ and then
$\tau\in\N$. Alternatively, it can stand for a $\tau$-periodic orbit
of the flow $\varphi_H^t$ with $\tau\in (0,\,\infty)$. Furthermore,
working with periodic orbits of flows or maps, we might or not have
the initial condition fixed. For instance, without an initial
condition fixed, a non-constant 1-periodic orbit of the flow of $H$
gives rise to a whole circle of 1-periodic orbits (aka fixed points)
of $\varphi_H$. Likewise, a prime $\tau$-periodic orbit of $\varphi_H$
comprises $\tau$ $\tau$-periodic points.  In most cases the exact
meaning should be clear from the context and is often immaterial; when
the difference is essential we will specify whether an orbit is of the
flow or the diffeomorphism and if the initial condition is fixed or
not.

Every $T$-periodic orbit $z$ of the Reeb flow with $T<a=\slope(H)$
gives rise to a 1-periodic orbit $\tz=(z,r_*)$ of the flow of $H$ with
$r_*$ determined by the condition
\begin{equation}
  \label{eq:level}
h'(r_*)=T.
\end{equation}
Clearly, $\tz$ lies in the shell $1<r<\rmax$, and we have a one-to-one
correspondence between 1-periodic orbits of $H$ and the periodic
orbits of $\varphi_\alpha^t$ with period $T<a$ whenever
$a\not\in \CS(\alpha)$. In the pair $\tz=(z,r_*)$, we usually view
$\tz$ as a 1-periodic orbit of the flow $\varphi^t_H$ of $H$ or a
circle of fixed points of the Hamiltonian diffeomorphism $\varphi_H$,
while $z$, contrary to what the notation might suggest, is
parametrized by the Reeb flow but not as a projection of $\tz$ to
$M$. (By \eqref{eq:level}, the two parametrizations of $z$ differ by
the factor of $h'(r_*)=T$.) Fixing an initial condition on $z$
determines an initial condition on $\tz$, and the other way around. In
particular, $z$ gives rise to a whole circle $\tz(S^1)$ of fixed
points of $\varphi_H$.

We say that a $T$-periodic orbit $z$ of the Reeb flow is
\emph{isolated} (as a periodic orbit) if for every $T'> T$ it is
isolated among periodic orbits with period less than $T'$. Clearly,
all periodic orbits of $\alpha$ are isolated if and only if for every
$T'$ the number of periodic orbits with period less than $T'$ is
finite.  For instance, a non-degenerate periodic orbit is isolated.
Note that $\tz$ is isolated as a 1-periodic orbit of the flow of $H$
if $z$ is isolated. No fixed point of $\varphi_H$ on $\tz(S^1)$ is
isolated, but $\tz$ is Morse--Bott non-degenerate, as the set of fixed
points $\tz(S^1)$, if and only if $z$ is non-degenerate;
cf.~\cite{Bo}.

The action functional $\CA_H$ is defined by
$$
\CA_H(\gamma)=\int_\gamma\hat{\alpha}-\int_{S^1} H(\gamma(t))\, dt,
$$
where $\gamma\colon S^1=\R/\Z\to \WW$ is a smooth loop in $\WW$ and
$\hat{\alpha}$ is the Liouville primitive $\alpha$ of $\omega$ on $W$
and $\hat{\alpha}=r\alpha$ on $M\times [1-\eta,\,\infty)$ for a
sufficiently small $\eta>0$. More explicitly, when
$\gamma\colon S^1\to M\times [1,\,\infty)$, we have
$$
\CA_H(\gamma)= \int_{S^1} r(\gamma(t))\alpha\big(\gamma'(t)\big)\, dt
- \int_{S^1} h\big(r(\gamma(t))\big)\, dt.
$$
Thus when $\gamma=\tz=(z,r_*)$ is a 1-periodic orbit of $H$, the
action can be expressed as a function of $r_*$ only:
$$
\CA_H(\tz)=A_h(r_*),
$$
where
\begin{equation}
  \label{eq:AH}
  A_h\colon [1,\,\infty)\to [0,\,\infty)\textrm{ is given by } A_h(r)=r
  h'(r)-h(r).
\end{equation}
Sometimes we will also denote this \emph{action function} by $A_H$.
This is a monotone increasing function, for
$$
A_h'(r)=h'(r)+ r h''(r)-h'(r)=rh''(r)\geq 0.
$$
It is not hard to show that
\begin{equation}
  \label{eq:maxAH}
\max A_h=A_h(\rmax)=c\geq a;
\end{equation}
see \cite[Sect.\ 2.1]{CGGM}. For this reason, we will in some
instances limit the domain of this function to $[1,\,\rmax]$.

While the function $A_h$ expresses the Hamiltonian action as a
function of $r$, we will also need another variant $\fa_H$ of the
action function, expressing the Hamiltonian action as a function of
the period $T$, i.e., the contact action. In other words, the function
$\fa_H$ translates the contact action to the Hamiltonian action. Thus
$$
\fa_H:=A_h\circ (h')^{-1}\colon [0,\,a]\to [0,\,\max A_h=A_h(\rmax)]
$$
is more specifically defined by the condition
\begin{equation}
  \label{eq:fa}
\fa_H(T)=A_h(r),\textrm{ where } h'(r)=T.
\end{equation}
Since $H$ is semi-admissible, $h'$ is one-to-one on $[1,\, \rmax]$,
and the inverse $(h')^{-1}$ is defined on $[0,\, a]$.  Then, using the
chain rule, we have
\begin{equation}
\label{eq:fa-der}
\fa'_H(T)=r:=(h')^{-1}(T)\textrm{ and } 1\leq \fa'_H\leq \rmax.
\end{equation}
Thus $\fa_H$ is a strictly monotone increasing, convex $C^1$-function,
which is $C^\infty$ on $(0,\,a )$, with $\fa''_h=\infty$ at $T=0$ and
$T=a$. Furthermore,
\begin{equation}
  \label{eq:fa-ineq}
\fa_{H_1}\leq \fa_{H_0} \textrm{ on } [0,\,\slope(H_0)] \textrm{
    whenever } H_1\geq H_0.
\end{equation}

A simple way to prove \eqref{eq:fa-ineq} is as follows, \cite[Sect.\
2.1]{CGGM}. First, note that that $-A_h(r)$ is the ordinate of the
intersection of the tangent line to the graph of $h$ at $(r,h(r))$
with the vertical axis. Furthermore, $\fa_{H_1}(T)=A_{h_1}(r_1)$ and
$\fa_{H_0}(T)=A_{h_0}(r_0)$, where $h'_1(r_1)=T=h'_0(r_0)$. Hence, the
two tangent lines have the same slope $T$. The tangent line to the
graph of $h_1$ lies above the tangent line to the graph of $h_0$; for
it passes through the point $(r_1, h_1(r))$ which is above the graph
of $h_0\leq h_1$. Therefore, $A_{h_0}(r_0)\geq A_{h_1}(r_1)$.

Furthermore, it is not
hard to see that
\begin{equation}
  \label{eq:fa-limit}
\fa_{sH}(T)\to T \textrm{ as } s\to\infty
\end{equation}
uniformly on compact sets in $[0,\,\infty)$ whenever $H$ is
semi-admissible.

\section{Filtered Floer and symplectic homology}
\label{sec:Floer}

In this section we recall basic definitions and results from Floer
theory used in the proof of Theorem \ref{thm:B}. Many, but not all, of
the constructions here are quite standard and go back to \cite{CFH,
  Vi} and can also be found in numerous subsequent accounts.

\subsection{Floer equation}
\label{sec:Floer-eq}
Fix an almost complex structure $J$ on $\WW$ satisfying the following
conditions:
\begin{itemize}
\item $J$ is compatible with $\omega$, i.e., $\omega(\cdot,\, J\cdot)$
  is a Riemannian metric,
\item $J r\p /\p r=R_\alpha$ on the cylinder $M\times [1,\,\infty)$,
\item $J$ preserves $\ker (\alpha) $.  
\end{itemize}
The last two conditions are equivalent to that
\begin{equation}
  \label{eq:complex_strc}
dr\circ J=-r\alpha.
\end{equation}
We call such almost complex structures \emph{admissible}. If the first
condition still holds on $\WW$, and the second and the third
conditions are met only outside a compact set while within a compact
set $J$ can be time-dependent and 1-periodic in time, we call $J$
\emph{admissible at infinity}.

Next, let $H$ be a Hamiltonian linear at infinity and let $J$ be an
admissible at infinity almost complex structure.  Following
\cite{CGGM, GG:LS}, it is convenient for our purposes to adopt the
$L^2$-anti-gradient of $\CA_H$,
\begin{equation}
  \label{eq:floer_1}
\p_s u =-\nabla_{L^2}\CA_H(u),
\end{equation}
as the Floer equation, where $u\colon \R\times S^1\to \WW$ and $(s,t)$
are the coordinates on $\R\times S^1$ with $S^1=\R/\Z$. Hence the
function $s\mapsto \CA_H\big(u(s,\,\cdot)\big)$ is decreasing.
Explicitly, this equation reads
\begin{equation}
\label{eq:floer_2}
\p_s u-J\big(\p_t u- X_H(u)\big)=0.
\end{equation}
Note that the leading term of this equation is the $\p$-operator, as
opposed to the $\bar{\p}$-operator as in the standard conventions.  In
other words, when $H\equiv 0$, solutions of \eqref{eq:floer_2} are
anti-holomorphic rather than holomorphic curves. Nonetheless the
standard properties of the solutions of the Floer equation readily
translate to our setting, e.g., via the change of variables
$s\mapsto -s$. We will often refer to solutions $u$ of the Floer
equation as \emph{Floer cylinders}. Recall that the energy of $u$ is
by definition
$$
E(u)=\int_{S^1\times\R}\|\p_s u\|^2\,dt\,ds.
$$

Let us assume from now on that $J$ is admissible and $H$ is
(semi-)admissible. Then the Floer equation is translation and rotation
invariant since, $J$ and $H$ are independent of $t$ (autonomous) and
$s$. Thus, whenever $u$ is a Floer cylinder,
$(s,t)\mapsto u(s+s_0,t+t_0)$ is also a Floer cylinder for all
$(s_0,t_0)\in \R\times S^1$. In particular, $u$ is never regular
unless $u$ is independent of $t$. Recall, however, that in the
notation from Section \ref{sec:setting}, $\tz=(z,r_*)$ is Morse--Bott
non-degenerate if and only if $z$ is non-degenerate; cf.~\cite{Bo}.

Let $u\colon \R \times S^1 \to\WW$ be a Floer cylinder for $H$. We say
that $u$ is \emph{asymptotic} to $\tz$ at $\infty$ if there exists a
sequence $s_i\to \infty$ such that $u(s_i,\cdot)\to \tz$ in the
$C^1$-sense, up to the choice of the initial condition on $\tz$ which
might depend on $s_i$.  (In other words, here we view $\tz$ as a
1-periodic orbit of the flow of $H$ without fixing an initial
condition and the choice of the initial condition turns it into a
1-periodic orbit of $\varphi_H$.)  This definition is equivalent to
that $u(s, \cdot)\to \tz$ in the $C^\infty$-sense as $s\to +\infty$
when $z$ is non-degenerate, and hence $\tz$ is Morse--Bott
non-degenerate. Moreover, in this case $u(0,s)$ converges as
$s\to\infty$, \cite{Bo}. (Likewise, $u$ is said to be asymptotic to
$\tz$ at $-\infty$ when $s_i\to -\infty$, etc.)

In general, $u$ can be asymptotic to more than one orbit $\tz$ at the
same end. However, $\CA_H(\tz)=\lim\CA_H\big(u(s_i,\cdot)\big)$, and
hence $\CA_H(\tz)$ is independent of the choice of $\tz$. Furthermore,
\eqref{eq:Energy-Action} below holds: $E(u)$ is the difference of
actions of the orbits which $u$ is asymptotic to at $\pm\infty$. It is
a standard fact that $u$ is asymptotic to some 1-periodic orbits of
$H$ at $\pm\infty$ if and only if $E(u)<\infty$; see \cite[Sec.\
1.5]{Sa}.

Next, assume that $u$ is asymptotic to $\tz$ at $\infty$ and $z$ is
isolated or, equivalently, $\tz$ is isolated as a 1-periodic orbit of
the flow of $H$. Then, as is easy to see, $\tz$ is unique (as a
1-periodic orbit of $\varphi_H^t$) and $u(s,\cdot)\to \tz$ as
$s\to\infty$ in the $C^1$-sense, up to the choice of an initial
condition on $\tz$ which might depend on $s$. This is a consequence of
the fact that
$$
E\big(u|_{[s_i, \infty)\times S^1}\big)\to 0\textrm{ as } s_i\to
\infty
$$
since $\CA_H\big(u(s,\cdot)\big)$ is a monotone function of $s$ and of
the argument in \cite[Sec.\ 1.5]{Sa}.

Let $u$ be asymptotic to $\tx=(x, r^+)$ at $-\infty$ and $\ty=(y,r^-)$
at $+\infty$. Then
\begin{equation}
  \label{eq:Energy-Action}
E(u)=\CA_H(\tx)-\CA_H(\ty)=A_H(r^+)-A_H(r^-).
\end{equation}
Here $r^+\geq r^-$ -- hence the notation -- since \eqref{eq:floer_1}
is an anti-gradient Floer equation and $A_H$ is an increasing
function.

We will make extensive use of two standard properties of Floer
cylinders $u$ for admissible or semi-admissible Hamiltonians $H$ and
admissible almost complex structures $J$.

The first one is the maximum principle asserting that the function
$r\circ u$ cannot attain a local maximum in the domain in
$\R\times S^1$ mapped by $u$ into $M\times [1,\,\infty)$, i.e., where
$r$ is defined. (We refer the reader to, e.g., \cite{Vi} and also
\cite[Sec.\ 2]{FS} for a direct proof of this fact.) The same is true
for $H$ linear and $J$ admissible at infinity, in the domain where
$H=ar-c$ and $J$ is admissible. Moreover, the maximum principle also
holds for continuation Floer trajectories when $h(r)=a(s)r-c(s)$ and
$a(s)$ is a non-decreasing function of $s$, with no constraints on the
function $c(s)$.  This version of the maximum principle is crucial to
having Floer cylinders and continuation solutions of the Floer
equation contained in a compact region of $\WW$, and hence the Floer
homology and continuation maps for homotopies with non-decreasing
slope are defined.

In particular, let, as above, $u$ be a solution of the Floer equation
asymptotic to 1-periodic orbits $\tx =(x,r^+)$ at $-\infty$ and
$\ty=(y,r^-)$ at $+\infty$. Then, by the maximum principle,
\begin{equation*}
\sup_{\R \times S^1} r\big(u(s,t) \big) \leq r^+
= r\big(u(-\infty,t)\big).
\end{equation*}

The second fact we will use is that $E(u)>\eps$ when $u$ enters
$\eta$-deep into $W$, i.e., $u$ is not entirely contained in
$M\times [1-\eta,\,\infty)$, for some $\eps>0$ depending on $J$ and
$\eta$ but independent of $u$ and a (semi-)admissible Hamiltonian
$H$. This is an immediate consequence of monotonicity since $H=\const$
in $W$, and hence $u$ is an (anti-)holomorphic curve; see, e.g.,
\cite{Si}.

\subsection{Floer homology and continuation maps}
\label{sec:Floer-cont}
Fix a ground field $\F$ which we will suppress in the notation. Let
$H$ be a Hamiltonian $H$ linear at infinity.  Assume first that
$\slope(H)\not\in\CS(\alpha)$. Then, regardless of whether $H$ is
non-degenerate or not, the filtered (contractible) Floer homology
$\HF^\tau(H)$ over $\F$ is readily defined as long as $\tau\in\R$ is
outside the action spectrum $\CS(H)$ of $H$. This is simply the
homology $\HF^\tau(\tH)$ of the Floer complex of a small
non-degenerate perturbation $\tH$ of $H$ with $\slope(\tH)=\slope(H)$,
generated by the 1-periodic orbits with action less than $\tau$. (Here
we treat $\HF^\tau(H)$ as an ungraded vector space over $\F$.) It is
easy to see that $\HF^\tau(\tH)$ is independent of $\tH$ when $\tH$ is
sufficiently close to $H$. The total Floer homology $\HF(H)$ is
$\HF^\infty(H)$ or, more precisely, $\HF^\tau(H)$ where
$\tau>\supp\CS(H)$.

Clearly, for $\tau_1\leq \tau_2$ we have the ``inclusion'' map
\begin{equation}
  \label{eq:incl}
\HF^{\tau_1}(H)\to \HF^{\tau_2}(H).
\end{equation}

As in Section \ref{sec:persistence}, we use \eqref{eq:semi-cont} to
extend this definition of $\HF^\tau(H)$ to all $\tau\in \R$. Namely,
for any $\tau\in\R$ which is now allowed to be in $\CS(H)$ or
$\tau=\infty$, we set
\begin{equation}
  \label{eq:left-limit}
\HF^\tau(H):=\varinjlim_{\tau'\leq \tau}\HF^{\tau'}(H),
\end{equation}
where we require that $\tau'\not\in\CS(H)$. The ``inclusion'' maps
naturally extend to these homology spaces and with this definition the
family of spaces $\tau\mapsto \HF^\tau(H)$ becomes a persistence
module. These maps are isomorphisms as long as the interval
$[\tau_1,\,\tau_2)$ contains no points of $\CS(H)$. (We changed the
notation for the persistence module parameter $s$ to $\tau$, for $s$
is taken by the homotopy parameter below.) In what follows we will be
interested in the number $\fbdyn_\eps(H)$ of bars of length greater
than $\eps>0$ in the barcode of this persistence module beginning
below $\fa_H(a)-\eps$, where $a=\slope(H)$; cf.\ Section
\ref{sec:persistence-bar}.

Note that 
\begin{equation}
  \label{eq:HF0}
\HF^\tau(H)=0 \textrm{ for }\tau\leq 0
\end{equation}
when $H$ is semi-admissible.

\begin{Remark}
  Alternatively, in a more \emph{ad hoc} fashion, one could have set
$$
\HF^\tau(H):=\varinjlim_{\tH\leq H}\HF^{\tau}(\tH),
$$
where $\tH$ is again non-degenerate, $\slope(\tH)=\slope(H)$, but now
$\tH\leq H$ pointwise and $\tau\not\in \CS(\tH)$. However, with our
conventions, this definition would \emph{not} be literally equivalent
to the one above and \eqref{eq:left-limit} would not hold in
general. In other words, $\tau\mapsto \HF^\tau(H)$ would not be a
persistence module in the sense of Section \ref{sec:persistence}:
Left-semicontinuity, \ref{PM3} and \eqref{eq:semi-cont}, would break
down. For instance, assume that $H\equiv \tau$ on $W$ and $H>\tau$ on
$M\times (0,\,\infty)$, e.g., $\tau=0$ and $H$ is
semi-admissible. Then we would have $\HF^\tau(H)=\H_*(W,M)\neq 0$ but
$\HF^{\tau'}(H)=0$ for all $\tau'<\tau$.
\end{Remark}

Let $H_s$, $s\in\R$, be a homotopy between two linear at infinity
Hamiltonians $H_0$ and $H_1$, i.e., $H_s$ is a family of linear at
infinity Hamiltonians such that $H_s=H_0$ when $s$ is close to
$-\infty$ and $H_s=H_1$ when $s$ is close to $+\infty$. (In what
follows we will take the liberty to have homotopies parametrized by
$s\in [0,\,1]$ or some other finite interval rather than $\R$.) There
are two cases where a homotopy gives rise to a map in Floer homology.

The first one is when all Hamiltonians $H_s$ have the same slope. Then
the homotopy induces a continuation map
$$
\HF^\tau(H_0)\to \HF^{\tau +C}(H_1)
$$
shifting the action filtration by
$$
C=\int_{-\infty}^\infty
\, \max_{z\in\WW} \, \max\{0,-\p_s H_s (z)\}\,ds.
$$
Moreover, it is well-known and not hard to show that $\HF(H)$ does not
change as long as $\slope(H)$ stays outside of $\CS(\alpha)$.

The second case is when $H_s$ is monotone increasing, i.e., the
function $s\mapsto H_s(z)$ is monotone increasing for all $z\in
\WW$. In particular, the function $s\mapsto \slope(H_s)$ is also
monotone increasing. Note that while $\slope(H_0)$ and $\slope(H_1)$
are still required to be outside $\CS(\alpha)$, the intermediate
slopes $\slope(H_s)$ can pass through the points of
$\CS(\alpha)$. Such a homotopy induces a map
$$
\HF^\tau(H_0)\to \HF^{\tau}(H_1)
$$
preserving the action filtration.

In both cases the fact that the continuation Floer trajectories are
confined to a compact set is a consequence of the maximum principle;
see Section \ref{sec:Floer-eq}.

The Floer homology is insensitive to perturbations of the Hamiltonian
$H$ and $\tau$ as long as $\slope(H)\not\in\CS(\alpha)$ and $\tau$
remains outside $\CS(H)$. To be more precise, fix a linear at infinity
Hamiltonian $H$ and $\tau$ meeting these conditions. Assume that the
slope of $H'$ is sufficiently close to the slope of $H$ and $H'$ is
$C^0$-close to $H$ on the complement of the domain where they both are
linear functions of $r$, and that $\tau'$ is close to $\tau$. Then
there is a natural isomorphism of the Floer homology groups
\begin{equation}
  \label{eq:invariance}
  \HF^\tau(H)\cong \HF^{\tau'}(H').
\end{equation}

Our next goal is to eliminate the assumption that
$a:=\slope(H)\not\in \CS(\alpha)$. The most important case in our
setting is that of the total Floer homology, i.e., $\tau>\sup \CS(H)$
for a semi-admissible Hamiltonian $H$, and we will focus on this
case. Then $V_s:=\HF(sH)$ is defined for all $s>0$ with
$sa\not\in\CS(\alpha)$. Moreover, the homotopy $H_s:=sH$ is monotone
increasing and these spaces are connected by continuation maps. (This
would not be true if $H$ were admissible rather than semi-admissible.)
In what follows, it is essential to extend the definition of $V_s$ to
all $s\in\R$ and turn $V_s$ into a persistence module. To this end, we
set $V_s=0$ for $s\leq 0$. When $s>0$ and $s a\in\CS(\alpha)$ we use
\eqref{eq:semi-cont} as in Section \ref{sec:persistence}:
\begin{equation}
  \label{eq:Fl-pers}
V_s:=\varinjlim_{s'<s}\HF(s' H),
\end{equation}
where $s'a\not\in\CS(\alpha)$. Clearly, $\{V_s\}$ is indeed a
persistence module. We will denote the number of bars of length
greater than $\eps>0$ beginning below $s$ in the barcode of $V$ by
$\fbfl_\eps(s, H)$.

\begin{Remark}
  Alternatively and more generally, for any Hamiltonian $H$ linear at
  infinity, we could have defined the filtered Floer homology as
$$
\HF^\tau(H):=\varinjlim_{H'\leq H}\HF^\tau(H'),
$$
where the limit is taken over Hamiltonians $H'\leq H$ linear at
infinity with $\slope(H')\not \in \CS(\alpha)$. One can show that,
when $H$ is (semi-)admissible, we may require $H'$ to be
(semi-)admissible and that this definition agrees with
\eqref{eq:Fl-pers} in the sense that
$$
\HF(H):=\varinjlim_{0<s<1}\HF(sH)
$$
when $H$ is semi-admissible; cf.\ Remark \ref{rmk:adm}.  However, for
our purposes, the definition, \eqref{eq:Fl-pers}, is more convenient
as it fits better in the general framework of persistence modules; see
Section \ref{sec:persistence}.
\end{Remark}

\subsection{Symplectic homology}
\label{sec:SH}
In this section we review the definition and properties of filtered
symplectic homology, focusing on its relations to the filtered Floer
homology of semi-admissible Hamiltonians. These relations are somewhat
less standard than the material from the previous two sections. Our
treatment of the question has some overlaps with, e.g., \cite{AM, Me},
although the setting and emphasis there are different, and also more
directly with \cite{CGGM}.

The \emph{symplectic homology} $\SH^\tau(\alpha)$, where
$\tau>0$, is defined as
\begin{equation}
  \label{eq:SH}
\SH^\tau(\alpha):=\varinjlim_H \HF^\tau(H),
\end{equation}
where traditionally the limit is taken over all Hamiltonians linear at
infinity and such that $H|_W<0$. Since admissible (but not
semi-admissible) Hamiltonians form a co-final family, we can limit $H$
to this class. Furthermore, we set
\begin{equation}
  \label{eq:SH0}
  \SH^\tau(\alpha):=0\textrm{ when } \tau\leq 0.
\end{equation}
When working with this definition, it is useful to keep in mind that,
by \eqref{eq:fa-limit},
\begin{equation}
  \label{eq:spectra-conv}
\CS(H)\to\{ 0\}\cup\CS(\alpha)
\end{equation}
uniformly on compact intervals. 

\begin{Remark}[Cofinal sequences]
  \label{rmk:adm}
  In \eqref{eq:SH}, with \eqref{eq:SH0} in mind, we could have
  required that $H|_W\leq 0$ rather than that $H|_W<0$, or
  equivalently allowed $H$ to be semi-admissible or admissible. This
  would result in the same groups $\SH^\tau(\alpha)$. Indeed, let $H$
  be a semi-admissible Hamiltonian. Pick two sequences of positive
  numbers: $s_i\to\infty$ and $\eps_i\to 0$. Then the sequence
  $H_i=s_i H-\eps_i$ is co-final in the class of admissible
  Hamiltonians.
\end{Remark}

The definition of symplectic homology via a direct limit,
\eqref{eq:SH}, over admissible or even semi-admissible Hamiltonians is
quite inconvenient for our purposes. In fact, the limit over a much
smaller class of Hamiltonians is sufficient:

\begin{Lemma}
  \label{lemma:sympl-Fl}
  Let $H$ be any semi-admissible Hamiltonian. Then we have
\begin{equation}
  \label{eq:sympl-Fl}
\SH^\tau(\alpha)=\varinjlim_{s\to\infty} \HF^\tau(sH)
\end{equation}
for any $\tau\leq \infty$.
\end{Lemma}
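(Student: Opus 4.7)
The strategy is to show that the one-parameter family $\{sH\}_{s>0}$ is \emph{cofinal} in the direct system defining $\SH^\tau(\alpha)$, from which the equality of direct limits then follows by the universal property of $\varinjlim$.

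By Remark 3.3 (cofinal sequences) I may replace the direct system in the definition of $\SH^\tau(\alpha)$ by one running over admissible Hamiltonians $K$ ordered pointwise, with transition maps the monotone continuation maps of Section 3.2. For cofinality of $\{sH\}$ inside this system I must exhibit, for every admissible $K$, an $s_0 > 0$ such that $sH \geq K$ pointwise on $\WW$ for all $s \geq s_0$. On $W$ this is immediate since $sH \equiv 0$ while $K|_W$ is a negative constant. On the cylindrical end $M \times [1, \infty)$ write $H = h(r)$ and $K = k(r)$; both are convex, $h(1)=0$ and $k(1) < 0$, and both become linear at infinity with slopes $a = \slope(H) > 0$ and $a' = \slope(K)$. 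Pick $s_0$ large enough that (i) $s_0 a > a'$, which forces $s_0 h - k$ to be strictly increasing once $r$ is past the linear threshold $R := \max\{\rmax(h), \rmax(k)\}$, and (ii) $s_0 h \geq k$ on the compact transition region $[1, R]$. For (ii), choose $\delta>0$ small enough that $k < 0$ on $[1, 1+\delta]$ (so that $sh \geq 0 > k$ there for any $s > 0$); on $[1+\delta, R]$ the function $h$ is bounded below by $h(1+\delta) > 0$, so $sh$ diverges uniformly and dominates the bounded function $k$ for $s$ sufficiently large. Taking $s_0$ to meet both (i) and (ii) establishes cofinality.

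Once cofinality is in hand, the canonical maps $\HF^\tau(sH) \to \SH^\tau(\alpha)$ assemble into an isomorphism
$\varinjlim_{s \to \infty} \HF^\tau(sH) \;\cong\; \SH^\tau(\alpha)$
whenever $\tau$ lies outside the relevant action spectra. The required compatibility between continuation maps inside the subfamily $\{sH\}_s$ and those coming from the full direct system is the usual functoriality of monotone continuation maps (up to chain homotopy); the maximum principle of Section 3.1 guarantees that all continuation trajectories stay in a compact region, so nothing goes wrong as $\slope(sH) = sa$ varies. For general $\tau \leq \infty$, in particular $\tau \in \CS(\alpha)$ or $\tau = \infty$, both sides are defined via the left-semicontinuous extension \eqref{eq:left-limit}, which commutes with direct limits, so the identification passes to these values of $\tau$ as well.

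The main subtlety I expect to address carefully is bookkeeping around the action spectrum: as $s$ grows, $sa$ sweeps through values in $\CS(\alpha)$, and for those exceptional $s$ the group $\HF^\tau(sH)$ is defined only indirectly through the left-semicontinuous limit. This does not interfere with the cofinality argument above, but it does mean that the naturality statements connecting $\varinjlim_s \HF^\tau(sH)$ with $\SH^\tau(\alpha)$ must be verified at the level of persistence modules rather than individual groups; the maximum principle and the monotonicity of $\fa_H$ in $H$ (equation \eqref{eq:fa-ineq}) are the tools that make this routine.
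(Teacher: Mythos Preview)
Your cofinality argument has a genuine gap. The family $\{sH\}_{s>0}$ consists of \emph{semi-admissible} Hamiltonians (since $sH|_W\equiv 0$), so it is not contained in the directed system of admissible Hamiltonians over which $\SH^\tau(\alpha)$ is defined. What you actually prove---that every admissible $K$ is eventually dominated by $sH$---only yields a map $\SH^\tau(\alpha)\to\varinjlim_s\HF^\tau(sH)$, not an isomorphism. To get the inverse you would need, for each $s$, an admissible $K\geq sH$; but $K|_W<0=(sH)|_W$ makes this impossible. You try to bypass this by invoking Remark~\ref{rmk:adm} to enlarge the directed system to include semi-admissible Hamiltonians, but the content of that remark (that enlarging the system does not change $\SH^\tau$) is essentially equivalent to the lemma you are proving, so the appeal is circular.

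The paper's proof confronts this directly. It takes the genuinely admissible cofinal sequence $H_i:=s_iH-\eps_i$ (this is the substance of Remark~\ref{rmk:adm}), so that $\SH^\tau(\alpha)=\varinjlim_i\HF^\tau(H_i)$ by honest cofinality, and then uses the action shift $\HF^\tau(H_i)=\HF^{\tau-\eps_i}(s_iH)$. The remaining and essential step is to show that $\HF^{\tau-\eps_i}(s_iH)\cong\HF^\tau(s_iH)$ for large $i$; this requires the spectral-gap argument using $\CS(s_iH)\to\CS(\alpha)\cup\{0\}$, treated separately for $\tau\notin\CS(\alpha)$ and $\tau\in\CS(\alpha)$ (where one must also choose $s_i$ so that $\tau\notin\CS(s_iH)$). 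This action-spectra bridge between the admissible $H_i$ and the semi-admissible $s_iH$ is precisely the missing idea in your plan. The subtlety you flag---that $sa$ may hit $\CS(\alpha)$---is real but secondary; the crucial bookkeeping is about $\tau$ relative to $\CS(s_iH)$, not about the slope.
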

Clearly, similar statement holds for any action interval. We will
prove the lemma a bit later in this section. In fact, passing to a
limit in the definition of the symplectic homology is not needed at
all if one is willing to make concessions of restricting the action
range from above and also slightly reparametrizing the action.

\begin{Theorem}
  \label{thm:sympl-Fl}
  Let $H$ be a semi-admissible Hamiltonian with $\slope(H)=a$. Then,
  for every $\tau\leq a$, there exists an isomorphism
$$
\Phi_H^\tau\colon \SH^\tau(\alpha) \stackrel{\cong}{\longrightarrow}
\HF^{\fa_H(\tau)}(H),
$$
where the function $\fa_H$ turning the Reeb period (aka the contact
action) into the Hamiltonian action is defined by \eqref{eq:fa} in
Section \ref{sec:setting}.

Moreover, these isomorphisms are natural in the sense that they
commute with the ``inclusion'' and monotone continuation maps. To be
more precise, for any $\tau'\leq \tau\leq a$ and two semi-admissible
Hamiltonians $H'\leq H$, the diagrams
$$
\begin{tikzcd}[row sep=large]
&\SH^{\tau'}(\alpha)
\arrow[r,"\Phi_H^{\tau'}"]
\arrow[d]
&
\HF^{\fa_H(\tau')}(H)
\arrow[d]
\\
&
\SH^{\tau}(\alpha)
\arrow[r,"\Phi_H^{\tau}"]
&
\HF^{\fa_H(\tau)}(H)
\end{tikzcd}
$$
and
$$
\begin{tikzcd}[row sep=large]
&\SH^{\tau}(\alpha)
\arrow[r,"\Phi_{H'}^{\tau}"]
\arrow[d, "\id"]
&
\HF^{\fa_{H'}(\tau)}(H')
\arrow[d]
\\
&
\SH^{\tau}(\alpha)
\arrow[r,"\Phi_{H}^{\tau}"]
&
\HF^{\fa_{H}(\tau)}(H)
\end{tikzcd}
$$
commute, where the vertical arrows are the ``inclusion'' maps in the
first diagram and the right vertical arrow is the monotone
continuation map in the second.
\end{Theorem}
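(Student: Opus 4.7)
The plan is to construct $\Phi_H^\tau$ via the direct limit description of $\SH^\tau(\alpha)$ provided by Lemma \ref{lemma:sympl-Fl}, matching the limit $\varinjlim_{s\to\infty}\HF^\tau(sH)$ with the single group $\HF^{\fa_H(\tau)}(H)$ by a persistence-module invariance argument along the monotone family $\{rH\}_{r\geq 1}$. Using \eqref{eq:semi-cont} and \eqref{eq:left-limit}, I would first reduce to the generic case $\tau\notin \CS(\alpha)\cup\{0\}$ with $a=\slope(H)\notin \CS(\alpha)$, recovering general $\tau$ by left limits and general slopes via the persistence extension \eqref{eq:Fl-pers}. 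In this generic setting, since $\fa_{rH}$ is a strictly increasing bijection translating Reeb periods into Hamiltonian actions, the cutoff $\fa_{rH}(\tau)$ never lies in $\CS(rH)$.

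The central step is to establish a canonical \emph{slope-rescaling isomorphism}
\[
  \Psi_{1,s}\colon \HF^{\fa_H(\tau)}(H)\stackrel{\cong}{\longrightarrow} \HF^{\fa_{sH}(\tau)}(sH), \qquad s\geq 1.
\]
Along the monotone family $rH$ for $r\in [1,s]$, the generators of $\CF^{\fa_{rH}(\tau)}(rH)$ correspond bijectively to Reeb orbits of period less than $\tau$, a correspondence independent of $r$. Since $\fa_{rH}(\tau)\notin \CS(rH)$, the invariance \eqref{eq:invariance} applied on small subintervals, combined with a connectedness/compactness argument on $[1,s]$, identifies all these groups via monotone continuation and produces $\Psi_{1,s}$. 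I then set
\[
  \psi_s\colon \HF^\tau(sH)\hookrightarrow \HF^{\fa_{sH}(\tau)}(sH)\xrightarrow[\cong]{\Psi_{1,s}^{-1}} \HF^{\fa_H(\tau)}(H),
\]
the first arrow being an inclusion because $\tau\leq \fa_{sH}(\tau)$ by \eqref{eq:fa-der}. Naturality of continuation maps makes $\{\psi_s\}$ compatible with the continuation $\HF^\tau(s_1H)\to \HF^\tau(s_2H)$, so passing to the direct limit and invoking Lemma \ref{lemma:sympl-Fl} yields $\Phi_H^\tau$. That $\Phi_H^\tau$ is an isomorphism then follows from the observation that $\fa_{sH}(\tau)\searrow \tau$ by \eqref{eq:fa-limit} while $\tau\notin \CS(\alpha)$, so for large $s$ the window $[\tau,\fa_{sH}(\tau))$ contains no critical values of $\CA_{sH}$; hence $\psi_s$ is itself an isomorphism in that range, and a direct limit of isomorphisms is an isomorphism.

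For naturality, the first diagram (inclusion $\tau'\leq \tau$) is immediate from functoriality of the construction in the action cutoff. The second diagram (monotone continuation $H'\leq H$) follows from a two-parameter refinement of the slope-rescaling argument applied to a monotone homotopy from $H'$ to $H$: since $\fa_H(\tau)\leq \fa_{H'}(\tau)$ by \eqref{eq:fa-ineq}, the usual action-preserving continuation $\HF^{\fa_{H'}(\tau)}(H')\to \HF^{\fa_{H'}(\tau)}(H)$ has image in the subspace $\HF^{\fa_H(\tau)}(H)$, and this restriction is exactly the map appearing in the theorem. The main obstacle I anticipate is verifying that $\Psi_{1,s}$ is truly canonical — independent of the partition of $[1,s]$ and of the small perturbations required when $ra\in \CS(\alpha)$; this is precisely what the persistence module framework of Section \ref{sec:persistence}, together with \eqref{eq:Fl-pers} and the left-limit extension \eqref{eq:left-limit}, is designed to handle.
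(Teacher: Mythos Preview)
Your approach is essentially the same as the paper's. The paper's proof also builds the isomorphism by comparing $\HF^{\fa_H(\tau)}(H)$ with $\HF^{\fa_{sH}(\tau)}(sH)$ along the monotone family $\{sH\}_{s\geq 1}$, then passes to the direct limit using Lemma~\ref{lemma:sympl-Fl} and \eqref{eq:fa-limit}. The only real difference is packaging: where you sketch the construction of your slope-rescaling isomorphism $\Psi_{1,s}$ via \eqref{eq:invariance} on small subintervals, the paper simply invokes a result from \cite{CGGM} (stated there as Lemma~\ref{prop:f}), which asserts directly that for semi-admissible $H_0\leq H_1$ the reparametrization $f=\fa_{H_1}\circ\fa_{H_0}^{-1}$ induces natural isomorphisms $\HF^\tau(H_0)\cong\HF^{f(\tau)}(H_1)$. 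Your $\Psi_{1,s}$ is exactly this isomorphism specialized to $H_0=H$, $H_1=sH$, and the ``main obstacle'' you flag---canonicity across partitions and across points where $ra\in\CS(\alpha)$---is precisely the content of that lemma. So your proposal is correct, and what you are really doing is outlining a proof of Lemma~\ref{prop:f} rather than citing it.
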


The first consequence of the theorem, central to this paper, is the
fact that the filtered symplectic homology defined as above is a
persistence module. (This does not directly follow the definition.)

\begin{Corollary}
 \label{cor:SH-persistence} 
 The family of vector spaces $\tau\mapsto \SH^\tau(\alpha)$ is a
 persistence module in the sense of Section \ref{sec:persistence} with
 structure maps defined as the direct limit of the ``inclusion'' maps
 \eqref{eq:incl}.
\end{Corollary}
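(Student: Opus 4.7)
The plan is to reduce each of the four axioms \ref{PM1}--\ref{PM4} for the candidate persistence module $\tau \mapsto \SH^\tau(\alpha)$ to the corresponding property of the filtered Floer persistence module $\sigma \mapsto \HF^\sigma(H)$ already constructed in Section \ref{sec:Floer-cont}, by transporting along the natural isomorphisms $\Phi_H^\tau$ supplied by Theorem \ref{thm:sympl-Fl}. Concretely, for any fixed $\tau_0 > 0$ I choose a semi-admissible Hamiltonian $H$ with slope $a > \tau_0$ and $a \notin \CS(\alpha)$; then for every pair $0 < \tau' \leq \tau \leq \tau_0$, the first commutative square in Theorem \ref{thm:sympl-Fl} identifies the SH ``inclusion'' $\SH^{\tau'}(\alpha) \to \SH^{\tau}(\alpha)$ with the Floer inclusion $\HF^{\fa_H(\tau')}(H) \to \HF^{\fa_H(\tau)}(H)$ via $\Phi_H^{\tau'}$ and $\Phi_H^\tau$. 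By \eqref{eq:fa-der} the reparametrization $\fa_H$ is a strictly monotone increasing homeomorphism from $[0, a]$ onto its image, and it sends $\{0\} \cup (\CS(\alpha) \cap (0, a))$ bijectively onto $\CS(H)$; consequently every local statement established for $\HF^\sigma(H)$ over $\sigma \in [0, \fa_H(\tau_0)]$ translates verbatim into one for $\SH^\tau(\alpha)$ over $\tau \in [0, \tau_0]$, and letting $\tau_0 \to \infty$ covers all of $(0, \infty)$.

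With this dictionary in hand, axiom \ref{PM4} is built into \eqref{eq:SH0}. Setting $\CS := \{0\} \cup \CS(\alpha)$, axiom \ref{PM1} follows because whenever $\tau' \leq \tau$ lie in the same component of $\R \setminus \CS$, the interval $[\fa_H(\tau'), \fa_H(\tau)]$ avoids $\CS(H)$, so the corresponding Floer inclusion is an isomorphism and hence so is the SH inclusion. Axiom \ref{PM2} follows since a slope-preserving non-degenerate perturbation $\tH$ of $H$ has only finitely many 1-periodic orbits of action below any given bound, whence $\SH^\tau(\alpha) \cong \HF^{\fa_H(\tau)}(H)$ is finite-dimensional for every $\tau < \infty$, giving $q$-tameness a fortiori. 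For axiom \ref{PM3}, the continuity and strict monotonicity of $\fa_H$ make $\{\fa_H(\tau') : \tau' < \tau\}$ cofinal in $\{\sigma' : \sigma' < \fa_H(\tau)\}$, so the left-semicontinuity \eqref{eq:left-limit} on the Floer side transfers through the naturality square into $\SH^\tau(\alpha) \cong \varinjlim_{\tau' < \tau} \SH^{\tau'}(\alpha)$.

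All of the substantial work has thereby been deferred to Theorem \ref{thm:sympl-Fl}: once that theorem, including its naturality in both arguments, is in hand, the corollary is an essentially mechanical unpacking. The only point deserving of a little care, and the closest thing to an obstacle here, is the verification that the structure maps on the SH side --- originally produced as direct limits over $H$ of the Floer inclusions \eqref{eq:incl} --- are genuinely the ones intertwined by $\Phi_H$ with the Floer inclusions on the right for \emph{every} $\tau$, including the values $\tau \in \CS(\alpha)$ at which $\SH^\tau(\alpha)$ is forced on us by the left-semicontinuity extension rather than produced directly from Hamiltonian orbits. This compatibility is precisely the content of the first commutative square in Theorem \ref{thm:sympl-Fl}, which I will use as a black box.
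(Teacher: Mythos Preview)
Your proposal is correct and follows essentially the same approach as the paper: both arguments defer the only nontrivial axiom, left-semicontinuity \ref{PM3}, to the first commutative square of Theorem~\ref{thm:sympl-Fl} combined with the continuity of $\fa_H$ and the fact that $\sigma\mapsto\HF^\sigma(H)$ is already a persistence module, while treating \ref{PM1}, \ref{PM2}, \ref{PM4} as routine (the paper simply declares them ``obvious'').

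One small overstatement worth flagging: you assert that $\SH^\tau(\alpha)\cong\HF^{\fa_H(\tau)}(H)$ is finite-dimensional for \emph{every} $\tau<\infty$. At points $\tau\in\CS(\alpha)$ this space is defined as a direct limit over $\tau'<\tau$ with $\tau'\notin\CS(H)$, and nothing a priori prevents the dimensions from blowing up along such a sequence. What you actually need for \ref{PM2} is only that each structure map $\pi_{\tau'\tau}$ has finite rank, and this follows immediately by factoring through any intermediate $\tau''\in(\tau',\tau)$ with $\tau''\notin\CS(\alpha)$, where $\SH^{\tau''}(\alpha)$ is genuinely finite-dimensional. Your phrase ``giving $q$-tameness a fortiori'' suggests you see the distinction, but the intermediate claim as stated is not justified.
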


\begin{proof}
  Conditions \ref{PM1}, \ref{PM2} and \ref{PM4} from Section
  \ref{sec:persistence} are obviously satisfied with $s_0=0$ and
  $\CS:=\CS(\alpha)\cup\{0\}$. Left-semicontinuity, \ref{PM3}, follows
  from the first commutative diagram together with the facts that the
  function $\fa_H$ is continuous and $t\mapsto \HF^t(H)$ is a
  persistence module.
\end{proof}

The second consequence of the theorem is that to obtain the filtered
symplectic homology for a finite range of action it suffices to take a
semi-admissible Hamiltonian with an appropriate slope without passing
to the limit; cf.\ \cite{Vi}. Indeed, setting $\tau=\infty$ or just
$\tau>\sup\CS(H)$ we have the following.

\begin{Corollary}
 \label{cor:sympl-Fl} 
 For any semi-admissible Hamiltonian $H$ with 
 $\slope(H)=a$,
 $$
\SH^a(\alpha)\cong \HF(H).
$$
Moreover, whenever $H'\leq H$ are semi-admissible with $a'=\slope(H')$
and $a=\slope(H)$, the diagram
$$
\begin{tikzcd}[row sep=large]
&\SH^{a'}(\alpha)
\arrow[r,"\cong"]
\arrow[d]
&
\HF(H')
\arrow[d]
\\
&
\SH^{a}(\alpha)
\arrow[r,"\cong"]
&
\HF(H)
\end{tikzcd}
$$
commutes, where the left vertical arrow is the structure or
``inclusion'' map and the right vertical arrow is the continuation
map.
\end{Corollary}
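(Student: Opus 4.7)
The plan is to deduce the corollary as a direct specialization of Theorem \ref{thm:sympl-Fl} at $\tau=a$, once one identifies the filtered Floer group $\HF^{\fa_H(a)}(H)$ with the total Floer homology $\HF(H)$. By \eqref{eq:fa} and \eqref{eq:maxAH}, $\fa_H(a) = A_h(\rmax) = c$; and since every 1-periodic orbit of a semi-admissible $H$ lies in the shell $1 \le r \le \rmax$ with action in $[0,c]$, one has $\sup\CS(H) \le c$. Hence $\HF^{\fa_H(a)}(H) = \HF(H)$ (in the left-semicontinuous sense \eqref{eq:left-limit} if $c \in \CS(H)$), and setting $\tau=a$ in Theorem \ref{thm:sympl-Fl} yields the first assertion.

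For the commutative square, I would stack two naturality squares from Theorem \ref{thm:sympl-Fl}. The top square, obtained from the second diagram of the theorem applied to $H' \le H$ at $\tau = a'$, reads
\begin{equation*}
\begin{tikzcd}[row sep=large]
\SH^{a'}(\alpha) \ar[r, "\Phi_{H'}^{a'}"] \ar[d, "\id"] & \HF^{\fa_{H'}(a')}(H') \ar[d] \\
\SH^{a'}(\alpha) \ar[r, "\Phi_{H}^{a'}"] & \HF^{\fa_{H}(a')}(H),
\end{tikzcd}
\end{equation*}
with right vertical the monotone continuation (consistent because $\fa_H \le \fa_{H'}$ on $[0,a']$, by \eqref{eq:fa-ineq}). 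The bottom square, obtained from the first diagram of the theorem applied to $H$ with $\tau' = a'$ and $\tau = a$, has both verticals the inclusion maps. Stacking these and identifying $\HF^{\fa_{H'}(a')}(H') = \HF(H')$ and $\HF^{\fa_H(a)}(H) = \HF(H)$ as above recovers the diagram of the corollary, provided one checks that the composition of the two right-hand vertical maps coincides with the standard monotone continuation $\HF(H') \to \HF(H)$. This last compatibility is just the naturality of continuation under action truncation.

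The only delicate point I anticipate is the edge case $a \in \CS(\alpha)$ (and similarly for $a'$): the Reeb orbit of period $a$ lifts to a Morse--Bott family of 1-periodic orbits of $H$ at $r = \rmax$ of action precisely $c$, so the identification $\HF^{\fa_H(a)}(H) = \HF(H)$ must then be read in the left-semicontinuous persistence-module sense of \eqref{eq:left-limit}; equivalently, one slightly perturbs the slope outside $\CS(\alpha)$ and invokes the invariance \eqref{eq:invariance}.
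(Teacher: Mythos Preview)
Your proposal is correct and follows essentially the same route as the paper. The paper does not give a separate proof of the corollary; it simply remarks, just before the statement, that one obtains it by ``setting $\tau=\infty$ or just $\tau>\sup\CS(H)$'' in Theorem~\ref{thm:sympl-Fl}, which is exactly your specialization at $\tau=a$ together with the identification $\HF^{\fa_H(a)}(H)=\HF(H)$. Your treatment is in fact more careful than the paper's one-line justification: you make explicit why $\fa_H(a)=c\geq\sup\CS(H)$, you spell out the stacking of the two naturality squares for the commutative diagram, and you flag the edge case $a\in\CS(\alpha)$, none of which the paper addresses.
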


With the corollaries stated, we conclude this section by proving Lemma
\ref{lemma:sympl-Fl} and Theorem \ref{thm:sympl-Fl}.

\begin{proof}[Proof of Lemma \ref{lemma:sympl-Fl}]
  The lemma is essentially a consequence of Remark \ref{rmk:adm} and
  the definitions.  When $\tau\leq 0$, the statement follows
  immediately from \eqref{eq:HF0} and \eqref{eq:SH0}. Ditto for
  $\tau=\infty$. Hence, we will assume that $0<\tau<\infty$ throughout
  the rest of the proof.

  Clearly, in \eqref{eq:sympl-Fl} we can replace the direct limit as
  $s\to \infty$ by the direct limit over any monotone increasing
  sequence $s_i\to\infty$, and the limit is independent of this
  sequence. Fix such a sequence and any monotone decreasing sequence
  $\eps_i\to 0^+$. Then $H_i:=s_i H-\eps_i$ is a cofinal sequence,
  which we can use in \eqref{eq:SH}. On the other hand,
$$
\HF^\tau(H_i)=\HF^{\tau-\eps_i}(s_iH).
$$

Assume first that $\tau\not\in\CS(\alpha)$.  Then
$\tau-\eps_i\not \in \CS(\alpha)$ for all large $i$ and, by
\eqref{eq:spectra-conv}, $\tau-\eps_i$ and $\tau$ are in the same
connected component of the complement to
$\CS(s_iH)\to \CS(\alpha)\cup\{0\}$. (This is the point where it is
essential that $\tau\neq 0$.) Therefore,
$$
\HF^{\tau-\eps_i}(s_iH)=\HF^{\tau}(s_iH)
$$
due to \eqref{eq:invariance}, and the statement again follows by
passing to the limit as $i\to\infty$. It is essential for the next
step that here the sequences $s_i\to\infty$ and $\eps_i\to 0^+$ are
arbitrary.

Now, assume that possibly $\tau\in \CS(\alpha)$. Pick a sequence
$s_i\to\infty$ so that $\tau\not\in \CS(s_iH)$. Then
$$
\HF^\tau(s_iH)=\HF^t(s_iH)
$$
for all $t\in [\tau-\delta_i, \, \tau+\delta_i]$ and for some
$\delta_i>0$ depending on $i$. Next, choose a sequence $\eps_i\to 0^+$
such that $\eps_i<\delta_i$. Then, by \eqref{eq:invariance},
$$
\HF^\tau(s_iH)=\HF^{\tau-\eps_i}(s_iH)=\HF^{\tau}(H_i)
$$
for all large $i$. Passing to the limit as $i\to\infty$, we obtain
\eqref{eq:sympl-Fl}.
\end{proof}

\begin{proof}[Proof of Theorem \ref{thm:sympl-Fl}]
  Let $H_0\leq H_1$ be two semi-admissible Hamiltonians. For the sake
  of simplicity we will assume that they have the same $\rmax$. (This
  assumption is not essential.) Consider the function
$$
f=f_{H_0,H_1}:=\fa_{H_1}\circ \fa_{H_0}^{-1}
\colon [0,\, A_{H_0}(\rmax)]\to [0,\, A_{H_1}(\rmax)].
$$
The function $f$ is monotone as a composition of two monotone
increasing functions and gives rise to a one-to-one correspondence
between the action spectra as long as the target is in the range of
$f$. Furthermore, $f(\tau)\leq \tau$ for all $\tau$; see
\cite{CGGM}. The proof of the theorem hinges on the following result.

\begin{Lemma}[Prop.\ 3.1, \cite{CGGM}]
  \label{prop:f}
  For all $\tau< A_{H_0}(\rmax)$, there are isomorphisms of the Floer
  homology groups
  \begin{equation}
    \label{eq:isom-f}
    \HF^\tau(H_0)\stackrel{\cong}{\longrightarrow} \HF^{f(\tau)}(H_1).
\end{equation}
These isomorphisms are natural in the sense that they commute with
``inclusion'' maps and monotone homotopies.
\end{Lemma}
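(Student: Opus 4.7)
The plan is to construct the isomorphism at the chain level from a carefully chosen monotone homotopy between $H_0$ and $H_1$, exploiting the bijection between their 1-periodic orbits and Reeb orbits of period less than the respective slopes. The function $f$ is precisely the rule that transports the Hamiltonian $H_0$-action of a 1-periodic orbit to the Hamiltonian $H_1$-action of the 1-periodic orbit associated with the \emph{same} underlying Reeb orbit: if $\tz_0$ lies at level $r_0$ with $h_0'(r_0)=T$ (so $\CA_{H_0}(\tz_0)=\fa_{H_0}(T)$) and $\tz_1$ is at level $r_1$ with $h_1'(r_1)=T$, then $\CA_{H_1}(\tz_1)=\fa_{H_1}(T)=f(\fa_{H_0}(T))$. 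Our goal is to show that a suitable continuation map realizes this bijection at the filtered chain level.

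First, I would reduce to the Morse--Bott non-degenerate setting by perturbing $\alpha$ and then $H_0, H_1$, invoking the invariance \eqref{eq:invariance} together with the left-semicontinuity \eqref{eq:left-limit} to justify that the claim in the limit follows from the claim for small generic perturbations; in particular, an arbitrarily small perturbation moves $\tau$ away from the spectrum. Second, I would consider the linear monotone homotopy $H_s:=(1-\chi(s))H_0+\chi(s)H_1$ with $\chi\colon\R\to[0,1]$ smooth increasing; since $H_1\geq H_0$ this is non-decreasing in $s$, its slope is non-decreasing, and the maximum principle of Section \ref{sec:Floer-eq} confines the continuation trajectories to a compact region of $\WW$. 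This produces a continuation chain map $\Psi\colon \CF(H_0)\to \CF(H_1)$ whose action shift is controlled by $\int_\R\max(-\p_s H_s,0)\,ds=0$, so at first pass $\Psi$ carries $\CF^\tau(H_0)$ into $\CF^\tau(H_1)$.

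The heart of the argument is the sharp filtration bound: if a continuation trajectory goes from $\tx=(x,r^+)$ (corresponding to Reeb period $T_0=h_0'(r^+)$) at $s=-\infty$ to $\ty=(y,r^-)$ (corresponding to Reeb period $T_1=h_1'(r^-)$) at $s=+\infty$, then $T_1\leq T_0$, which by monotonicity of $\fa_{H_1}$ is equivalent to $\CA_{H_1}(\ty)\leq f(\CA_{H_0}(\tx))$. To see this, I would apply the maximum principle to the homotopy, noting that each $H_s$ is radial in the cylindrical end, and deduce that $r\circ u$ cannot attain an interior maximum in the region where $u$ takes values in the cylindrical end. Since the homotopy $H_s$ is monotone in $s$ and each $h_s$ is strictly convex on the relevant shell, a slightly refined computation of $\Delta(r\circ u)$ along the continuation Floer equation yields the bound $r^-\leq r^+$ when translated through the relation $h_s'(r)=T$, giving the desired inequality on periods.

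Once the sharp bound is proved, $\Psi$ descends to a chain-level map $\CF^\tau(H_0)\to \CF^{f(\tau)}(H_1)$, and a symmetric argument with the reverse homotopy (or a diagonal homotopy-of-homotopies argument as in standard Floer theory) supplies a chain-level inverse up to chain homotopy. This yields the isomorphism $\Phi\colon \HF^\tau(H_0)\xrightarrow{\cong}\HF^{f(\tau)}(H_1)$. Naturality with respect to the ``inclusion'' maps (varying $\tau$) and with respect to further monotone continuations follows directly from the standard functoriality of continuation maps together with the identity $f_{H_0,H_2}=f_{H_1,H_2}\circ f_{H_0,H_1}$. The main obstacle, as anticipated, will be Step 3: the naive continuation only gives $\fa_{H_1}(T_1)\leq \fa_{H_0}(T_0)$, which is weaker than $T_1\leq T_0$; extracting the sharper bound requires carefully exploiting the radial/convex structure of the homotopy rather than just the action inequality \eqref{eq:fa-ineq}.
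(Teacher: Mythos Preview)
The paper does not actually prove this lemma: it is imported from \cite{CGGM} (as Proposition~3.1 there) and used as a black box inside the proof of Theorem~\ref{thm:sympl-Fl}. So there is no proof in the present paper to compare against.

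That said, your plan has a genuine gap precisely at the step you yourself flag as the ``main obstacle.'' The maximum principle for a monotone radial homotopy $H_s=h_s(r)$ only rules out \emph{interior} local maxima of $r\circ u$; it does not pin the global maximum to the $s=-\infty$ end. In the $s$-independent case the conclusion $r^-\leq r^+$ comes from action monotonicity together with the fact that $A_h$ is increasing, but for an $s$-dependent homotopy the filtration inequality $A_{h_0}(r^+)\geq A_{h_1}(r^-)$ compares two \emph{different} increasing functions and yields nothing about $r^+$ versus $r^-$. Worse, even granting $r^-\leq r^+$ you would still not get $T_1=h_1'(r^-)\leq h_0'(r^+)=T_0$: the hypothesis $h_0\leq h_1$ with $h_0(1)=h_1(1)=0$ does \emph{not} imply $h_1'\leq h_0'$ pointwise (any nonnegative $h_1-h_0$ vanishing to second order at $r=1$ that rises and then falls gives a counterexample), so monotonicity of each $h_i'$ only produces $h_1'(r^-)\leq h_1'(r^+)$, which is useless. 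Your ``refined computation of $\Delta(r\circ u)$'' is not specified, and when one carries it out the $s$-dependence contributes a term of the form $r\,\partial_s h_s'(r)$ whose sign is unfavorable under the monotonicity constraint $\partial_s h_s\geq 0$. Finally, the reverse homotopy from $H_1$ to $H_0$ is monotone \emph{decreasing}, so its continuation map shifts the filtration by the constant $\max(H_1-H_0)$, not by the $\tau$-dependent quantity $\tau-f(\tau)$ you need; your ``symmetric argument'' for the inverse therefore does not go through either. You have correctly located the difficulty, but the proposed mechanism does not resolve it; for the actual argument you should consult \cite{CGGM}.
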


Set $H_0=H$ and $H_1=s H$ for $s\geq 1$ and $f_s:=f_{H,sH}$. Then, for
any $\tau\leq a$ we have isomorphisms of the Floer homology groups
$$
\HF^\tau(H)\stackrel{\cong}{\longrightarrow} \HF^{f_s(\tau)}(sH).
$$
It is not hard to see from \eqref{eq:fa-limit} that
$f_s(\tau)\to \fa^{-1}_H(\tau)$ as $s\to \infty$. Passing to the limit
as $s\to\infty$ and applying Lemma \ref{lemma:sympl-Fl}, we obtain the
inverse of the desired isomorphism
$\Phi_H^{\fa^{-1}_H(\tau)}$. Naturality of these isomorphisms readily
follows from that the isomorphisms \eqref{eq:isom-f} are natural.
\end{proof}

\section{Barcode entropy revisited}
\label{sec:def-rev}
While Definition \ref{def:barcode_entropy} is simple and intuitive, it
is not very convenient to work with; for it is not directly connected
to the dynamics of the Reeb flow or the Hamiltonian flow of a
(semi-)admissible Hamiltonian. Nor is it obviously related to the
Floer homology of such Hamiltonians. In this section we rephrase the
definition of barcode entropy in several different ways to remedy this
shortcoming.  While completely self-contained, the discussion below
has substantial overlaps with \cite{FLS}, although our treatment of
the question is more brief. We start with some simple algebraic
observations.

\subsection{Barcode entropy of a persistence module}
\label{sec:persistence-bar}
The definition of barcode entropy of Reeb flows extends to general
persistence modules in a straightforward way. Namely, let $V=\{V_s\}$
be a persistence module. Denote by $\fb_\eps(s,V)$ or just
$\fb_\eps(s)$ the number of bars of length greater than $\eps>0$ and
beginning below $s$.  The \emph{barcode entropy} of $V$ is then
defined as
\begin{equation}
  \label{eq:hbar-gen}
\hbar_\eps(V)=\limsup_{s\to\infty}\frac{\log^+
  \fb_\eps(s,V)}{s}
\quad
\textrm{ and }
\quad
\hbar(V)=\lim_{\eps\searrow 0}\hbar_\eps(V).
\end{equation}

We say that a persistence module $W=\{W_s\}$ is a
\emph{reparametrization} of $V$ if $W_s=V_{\xi(s)}$, where the
function $\xi\colon \R\to \R$ is continuous, strictly monotone
increasing and onto. (The structure maps in the persistence module $W$
come from the structure maps in $V$.) It is not hard to see that
\begin{equation}
  \label{eq:reparametrization}
\fb_{c\eps}(s,W)\leq\fb_{\eps}\big(\xi(s),V\big)
\end{equation}
when $\xi^{-1}$ is Lipschitz with (global) Lipschitz constant $c$ or,
equivalently,
$$
\fb_{c'\eps}\big(\xi(s),V\big)\leq \fb_{\eps}(s,W),
$$
where $c'$ is the (global) Lipschitz constant of $\xi$.
As a consequence,
\begin{equation}
  \label{eq:hbar-reparametrization}
  \hbar(W)=a \hbar(V)
\end{equation}
whenever $\xi$ is bi-Lipschitz, i.e., both $\xi$ and $\xi^{-1}$ are
Lipschitz on $\R$, and $a=\lim \xi(s)/s$ as $s\to\infty$ assuming that
the limit exists. For instance, \eqref{eq:hbar-reparametrization}
holds when $\xi(s)=a s$ for some $a>0$.

Next, given a persistence module $V$, let us define a family of
persistence modules $V(s)$, $s\in \R$, by truncating $V$ at $s$, i.e.,
$V(s)_\tau=V_\tau$ when $\tau\leq s$ and $V(s)_\tau=V_s$ when
$\tau\geq s$. (Warning: this is not the standard notion of truncation;
cf.\ Example \ref{ex:Floer}.) In other words, the finite bars in $V$
ending below $s$ give rise to the finite bars of $V(s)$, the infinite
bars or finite bars of $V$ containing $s$ become the infinite bars of
$V(s)$, and the other bars disappear. In particular, all bars in
$V(s)$ begin in the interval $[0,\, s)$.

Then, by analogy with the barcode entropy of a Hamiltonian
diffeomorphism (see \cite{CGG:Entropy}), we associate to $V$ the
\emph{dynamics barcode entropy} as follows. Set $\fbdyn_\eps(s,V)$ to
be the number of bars of length greater than $\eps>0$ beginning below
$s-\eps$ in the barcode of $V(s)$. Thus
$\fbdyn_\eps(s,V)=\fb_\eps(s-\eps, V)$ is almost to the total number
of bars longer than $\eps$ in $V(s)$, up to an error coming from the
bars beginning in $[s-\eps,\, s)$. Then the dynamics barcode entropy
of $V$ is defined by
\begin{equation}
  \label{eq:hbar-V-dyn}
\hbardyn_\eps(V)=\limsup_{s\to\infty}\frac{\log^+
  \fbdyn_\eps(s,V)}{s }
\quad
\textrm{ and }
\quad
\hbardyn(V)=\lim_{\eps\searrow 0}\hbardyn_\eps(V).
\end{equation}

\begin{Remark}
  The reason that the bars are required to begin below $s-\eps$ is
  that the bars of $V$ containing $s$, no matter how short, give rise
  to infinite bars in $V(s)$. The results from \cite{As, Kal} indicate
  that in the Hamiltonian case the number of such bars can grow
  arbitrarily fast for some sequence $s_k\to\infty$ without any clear
  relation to the topological entropy of the underlying system. The
  requirement on the beginning of the bars keeps such bars from
  affecting the count.
\end{Remark}  

\begin{Example}
  \label{ex:Floer}
  In the setting we are interested in, $V$ is the persistence module
  $\SH^{s}(\alpha)$, and, by Corollary \ref{cor:sympl-Fl}, the
  truncated module $V(sa)$ is isomorphic to a reparametrization of the
  Floer homology persistence module $\tau\mapsto \HF^\tau(sH)$ when
  $H$ is semi-admissible with $a=\slope H$. This example motivates the
  choice of our rather uncommon truncation procedure. An alternative
  would be a more standard variant of truncation where $V(s)_\tau:=0$
  for $\tau>s$. This variant would also be suitable for our purposes
  and adopting it we would count the bars longer than $\eps$ beginning
  below $s$ (rather than $s-\eps$), although the version we use is
  more intuitive from the Floer theoretic perspective.
\end{Example}

Note that the dynamics barcode entropy can be defined for any family
of persistence modules. For the family of truncated persistence
modules $V(s)$, the dynamics barcode entropy coincides with the
ordinary barcode entropy as the following formal and nearly obvious
proposition shows.

\begin{Proposition}
  \label{prop:hbar-dyn-V}
  For any persistence module $V$ and any $\eps>0$, we have
\begin{equation}
    \label{eq:hbar-dyn-V}
    \hbar_\eps(V)=\hbardyn_\eps(V) \quad
    \text{ and hence }
    \quad
    \hbar(V)=\hbardyn(V).
\end{equation}
Furthermore, in \eqref{eq:hbar-gen} and \eqref{eq:hbar-V-dyn},
replacing the upper limits as $s\to\infty$ by the upper limit over any
monotone increasing sequence $s_k\to\infty$ such that
$s_{k+1}/s_k\to 1$ does not affect the definitions and
\eqref{eq:hbar-dyn-V} holds already on the level of $\eps$-entropy.
\end{Proposition}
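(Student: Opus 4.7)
The plan is to reduce both assertions to routine $\limsup$ manipulations, using the identification $\fbdyn_\eps(s, V) = \fb_\eps(s - \eps, V)$ anticipated in the definition together with the monotonicity of $s \mapsto \fb_\eps(s, V)$.

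I would begin by verifying the identity $\fbdyn_\eps(s, V) = \fb_\eps(s - \eps, V)$. Using the description of truncation, a bar $(a, b] \in \CB(V)$ yields a bar $(a, b]$ in $V(s)$ when $b \leq s$, becomes the infinite bar $(a, \infty]$ when $a < s \leq b$, and disappears when $a \geq s$. If $a < s - \eps$ and $b > s$, then automatically $b - a > \eps$ and the induced bar in $V(s)$ is infinite; if $a < s - \eps$ and $b \leq s$, the induced bar has length $b - a$ and contributes iff $b - a > \eps$. Hence $\fbdyn_\eps(s, V)$ counts exactly the bars with $a < s - \eps$ and $b - a > \eps$, which is $\fb_\eps(s - \eps, V)$. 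Substituting $s' = s - \eps$ then rewrites
\[
\hbardyn_\eps(V) = \limsup_{s' \to \infty} \frac{\log^+ \fb_\eps(s', V)}{s' + \eps},
\]
and factoring the denominator as $s' \cdot (s' + \eps)/s'$ with $s'/(s'+\eps) \to 1$ (and non-negative numerator) shows this equals $\hbar_\eps(V)$. Letting $\eps \to 0^+$ then gives $\hbar(V) = \hbardyn(V)$.

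For the sparse sequence statement I would exploit monotonicity of $s \mapsto \fb_\eps(s, V)$. For every sufficiently large $s$ there is a unique $k = k(s)$ with $s_k \leq s < s_{k+1}$, and monotonicity yields
\[
\frac{\log^+ \fb_\eps(s, V)}{s} \leq \frac{\log^+ \fb_\eps(s_{k+1}, V)}{s_{k+1}} \cdot \frac{s_{k+1}}{s_k}.
\]
As $s \to \infty$, we have $k \to \infty$ and $s_{k+1}/s_k \to 1$ by hypothesis, so the $\limsup$ of the left-hand side is bounded above by $\limsup_k \log^+ \fb_\eps(s_k, V)/s_k$; the reverse inequality is trivial. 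The same argument applied to the monotone function $s \mapsto \fbdyn_\eps(s, V)$ handles \eqref{eq:hbar-V-dyn}.

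There is no substantive obstacle; the proposition is essentially an unpacking of the definitions. The point that warrants care is the role of the ``begins below $s - \eps$'' cutoff in the definition of $\fbdyn_\eps$: it is precisely what makes the identification with $\fb_\eps(s - \eps, V)$ exact, since bars of $V$ that merely touch $s$ become infinite under truncation and, without the cutoff, could contribute unpredictably to the count, as the Remark following the definition notes by citing \cite{As, Kal}.
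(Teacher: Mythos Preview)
Your proof is correct and follows essentially the same approach as the paper: the paper records the sandwich $\fb_\eps(s-\eps)=\fbdyn_\eps(s)\leq \fb_\eps(s)$ and concludes \eqref{eq:hbar-dyn-V} immediately, while you spell out the barcode identification and the $s'/(s'+\eps)\to 1$ step in more detail; the sparse-sequence argument is identical to the paper's. One minor imprecision: in your case split a bar $(a,b]$ with $b=s$ actually becomes infinite under truncation rather than staying finite, but since $a<s-\eps$ forces $b-a>\eps$ anyway this does not affect the count.
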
  

Hence, in what follows, we need not distinguish between these two
types of barcode entropy and will use the notation $\hbar$ for both of
them.

\begin{proof}
  By definition, for any $s\geq \eps>0$, we have
  $$
  \fb_\eps(s-\eps)=\fbdyn_\eps(s)\leq \fb_\eps(s),
  $$
  where we suppressed $V$ in the notation, and \eqref{eq:hbar-dyn-V}
  follows. To prove the moreover part, we focus on
  \eqref{eq:hbar-gen}. Clearly,
  $$
  \limsup_{k\to\infty}\frac{\log^+
    \fb_\eps(s_k)}{s_k}\leq \hbar_\eps
  $$
  for any sequence $s_k\to\infty$ and we only need to prove the
  opposite inequality.

  Let the sequence $s_k$ be as in the proposition. 
  Let $t_i\to\infty$ be such that
  $$
  \lim_{i\to\infty}\frac{\log^+
    \fb_\eps(t_i)}{t_i}= \hbar_\eps.
  $$
  For every $i$, pick $k_i$ so that $s_{k_i}\leq t_i<
  s_{k_i+1}$. Then
  $$
  \frac{s_{k_i+1}}{s_{k_i}}\frac{\log^+
    \fb_\eps(s_{k_i+1})}{s_{k_i+1}}=
  \frac{\log^+
    \fb_\eps(s_{k_i+1})}{s_{k_i}}
  \geq \frac{\log^+ \fb_\eps(t_i)}{t_i}.
  $$
  Passing to the (upper) limit and using the fact that
  $s_{k+1}/s_k\to 1$, we see that
  $$
  \limsup_{k\to\infty}\frac{\log^+ \fb_\eps(s_k)}{s_k}\geq
  \hbar_\eps,
  $$
  which concludes the proof of the proposition.
\end{proof}

\subsection{Barcode entropy via Floer homology}
\label{sec:barcode-Floer}
Let us now apply the observations from the previous section to Floer
homology.  Let $H$ be a semi-admissible Hamiltonian with slope $a$. As
in Section \ref{sec:Floer-cont}, consider the persistence module
$s\mapsto V_s$ formed by the Floer homology spaces $V_s:=\HF(s H)$
along with the continuation maps $\HF(s' H)\to \HF(s H)$ when
$s'\leq s$. By Corollary \ref{cor:sympl-Fl}, $V$ is isomorphic to the
symplectic homology persistence module $\SH^{as}(\alpha)$. Hence, the
number $\fbfl_\eps(s, H)$ of bars of length greater than $\eps>0$
beginning below $s$ in the barcode of $V$ is equal to
$\fb_\eps(as)$. As a consequence,
\begin{equation}
  \label{eq:hbar-fl}
\hbar_\eps(\alpha)=\frac{1}{a}\limsup_{s\to\infty}\frac{\log^+
  \fbfl_\eps(s, H)}{s}.
\end{equation}
From now on we will not distinguish the persistence modules
$\SH^{as}(\alpha)$ and $\HF(sH)$.

Alternatively, as has been pointed out in Section
\ref{sec:Floer-cont}, for a fixed Hamiltonian $H$ with slope $a$, we
can view the filtered Floer homology $\tau\mapsto\HF^\tau(H)$ as a
persistence module. Note that all bars in this persistence module
begin below $\fa_H(a)$. Let as in Section \ref{sec:persistence-bar}
$\fbdyn_\eps(H)$ be the number of bars of length greater than $\eps>0$
in its barcode beginning below $\fa_H(a)-\eps$. Then, analogously to
the definition of barcode entropy for Hamiltonian diffeomorphisms in
\cite{CGG:Entropy}, we set the \emph{dynamics barcode entropy} of $H$
to be
\begin{equation}
  \label{eq:hbar-dyn}
\hbardyn_\eps(H)=\limsup_{s\to\infty}\frac{\log^+
  \fbdyn_\eps(sH)}{s },
\end{equation}
and
$$
\hbardyn(H)=\lim_{\eps\searrow 0}\hbardyn_\eps(H).
$$
As readily follows from the definition,
$$
\hbardyn_\eps(c H)=c \cdot\hbardyn_\eps(H) \quad \text{ and } \quad
\hbardyn(cH)=c\cdot\hbardyn(H)
$$
for every $c>0$.

As in Section \ref{sec:persistence-bar}, denote by $V(s)$ the
persistence module obtained from the persistence module
$s\mapsto V_s:=\SH^{as}(\alpha)=\HF(sH)$ by truncating at $s$. It is
clear that $V(s)_\infty=\HF(sH)$. However, as persistence modules,
$V(s)$ and $\HF(sH)$ are in general different:
$V(s)_\tau\neq \HF^\tau(sH)$. Yet the two families of persistence
modules have the same barcode entropy as the following observation
along the lines of Proposition \ref{prop:hbar-dyn-V} asserts.

\begin{Theorem}
  \label{thm:two-defs}
  For any semi-admissible Hamiltonian $H$ with slope
  $a$, we have
  \begin{equation}
    \label{eq:two-defs}
  \hbar(\alpha)=\frac{\hbardyn(H)}{a}.
\end{equation}
Furthermore, in \eqref{eq:hbar-dyn}, replacing the upper limits as
$s\to\infty$ by the upper limit over any monotone increasing sequence
$s_k\to\infty$ such that $s_{k+1}/s_k\to 1$ does not affect the
definition and hence \eqref{eq:two-defs}.
\end{Theorem}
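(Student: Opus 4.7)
\emph{Proof proposal.}
The plan is to reduce the theorem to Proposition \ref{prop:hbar-dyn-V} applied to the symplectic homology persistence module $V\colon \tau\mapsto \SH^\tau(\alpha)$, using the identifications of Theorem \ref{thm:sympl-Fl}. The essential point -- and what I would pinpoint as the main obstacle -- is that the reparametrization relating $\HF^{\,\cdot}(sH)$ to $V$ must be bi-Lipschitz with constants \emph{independent of $s$}; otherwise the scaling in the $\limsup$ could not be controlled uniformly.

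By Theorem \ref{thm:sympl-Fl} applied to the semi-admissible Hamiltonian $sH$, the isomorphisms $\Phi_{sH}^\tau\colon \SH^\tau(\alpha)\cong \HF^{\fa_{sH}(\tau)}(sH)$ for $\tau\in[0,sa]$ exhibit the persistence module $\sigma\mapsto \HF^\sigma(sH)$ as the truncation $V(sa)$ (in the sense of Section \ref{sec:persistence-bar}), reparametrized by $\fa_{sH}$. Since $(sH)'=sh'$, formula \eqref{eq:fa-der} applied to $sH$ gives $\fa_{sH}'(T)=(h')^{-1}(T/s)\in[1,\rmax]$ for $T\in[0,sa]$ -- a range independent of $s$. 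Hence $\fa_{sH}$ is bi-Lipschitz on $[0,sa]$ with the $s$-independent constants $1$ and $\rmax$; in particular, the cutoff $\fa_{sH}(sa)-\eps$ in the definition of $\fbdyn_\eps(sH)$ pulls back under $\fa_{sH}^{-1}$ to a point in $[sa-\eps,\,sa-\eps/\rmax]$, and a bar $(a_1,a_2]$ of $V$ with $a_1<sa$ corresponds to a bar of $\HF^{\,\cdot}(sH)$ of length $\fa_{sH}(a_2)-\fa_{sH}(a_1)\in[a_2-a_1,\,\rmax(a_2-a_1)]$ when $a_2\le sa$, and to an infinite bar when $a_2>sa$.

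Combining these observations yields the two-sided comparison
\begin{equation*}
\fb_\eps(sa-\eps,\,V)\;\le\;\fbdyn_\eps(sH)\;\le\;\fb_{\eps/\rmax}(sa,\,V)
\end{equation*}
for every $s>0$. Taking $\log^+$, dividing by $s$ and passing to the $\limsup$ as $s\to\infty$ -- noting that a fixed additive shift in $\tau$ does not alter $\limsup_\tau\log^+\!\fb_\eps(\tau,V)/\tau=\hbar_\eps(\alpha)$ -- we obtain $a\,\hbar_\eps(\alpha)\le\hbardyn_\eps(H)\le a\,\hbar_{\eps/\rmax}(\alpha)$, and letting $\eps\searrow 0$ yields \eqref{eq:two-defs}. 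The moreover part follows from the very same sandwich applied along a sparse sequence $s_k$: setting $\tau_k:=s_k a$, the hypothesis $s_{k+1}/s_k\to 1$ gives $\tau_{k+1}/\tau_k\to 1$, so the moreover part of Proposition~\ref{prop:hbar-dyn-V} applied to $V$ forces both outer $\limsup$'s along $s_k$ to equal the same limits $a\,\hbar_\eps(\alpha)$ and $a\,\hbar_{\eps/\rmax}(\alpha)$ as before, which in turn collapse to $a\,\hbar(\alpha)$ in the $\eps\searrow 0$ limit.
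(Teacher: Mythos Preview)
Your proof is correct and follows essentially the same route as the paper's: both arguments invoke Theorem~\ref{thm:sympl-Fl} to identify the Floer persistence module $\sigma\mapsto\HF^\sigma(sH)$ with a truncation of the symplectic homology module reparametrized by $\fa_{sH}$, use the uniform bi-Lipschitz bound $1\le\fa_{sH}'\le\rmax$ from~\eqref{eq:fa-der}, and derive a two-sided comparison of bar counts that collapses in the $\eps\to 0$ limit. The paper phrases its sandwich as $\fbdyn_\eps(sH)\ge\fb_\eps(V(s))\ge\fbdyn_{\rmax\eps}(sH)$ and then appeals to Proposition~\ref{prop:hbar-dyn-V} and~\eqref{eq:hbar-reparametrization}, whereas you work directly with $\fb_\eps(sa-\eps,V)$ and $\fb_{\eps/\rmax}(sa,V)$; these are equivalent reformulations of the same estimate.
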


Due to this theorem we need not distinguish the barcode entropies
$\hbar$ and $\hbardyn$.  Note however that in contrast with
Proposition \ref{prop:hbar-dyn-V} or \eqref{eq:hbar-fl} we do not
claim here the equality on the level of $\eps$-barcode entropy, and we
believe that $\hbar_\eps(\alpha)\neq \hbardyn_\eps(H)/a$ in general.

\begin{Remark}[On the proof of Theorem \ref{thm:A}]
  \label{rmk:ThmA}
  Assume that $ka\not\in\CS(\alpha)$ for all $k\in\N$. Then taking
  $s_k=k$ in \eqref{eq:hbar-dyn} we arrive at the definition of
  barcode entropy for a semi-admissible Hamiltonian $H$ which is
  completely analogous to the definition of the barcode entropy for
  compactly supported Hamiltonian diffeomorphisms since
  $\varphi_{kH}=\varphi^k_H$.  Furthermore, the proof of \cite[Thm.\
  A]{CGG:Entropy} carries over word-for-word to semi-admissible
  Hamiltonians and, as a consequence, we arrive at Theorem \ref{thm:A}
  in a way somewhat different from \cite{FLS}. To be more specific, in
  this variant of the proof the tomograph construction is to be
  applied to the shell $U=\{1\leq r\leq\rmax\}$ resulting in the
  inequality
  $$
  \hbardyn(H)\leq \htop\big(\varphi_H|_U\big)=a\htop(\alpha).
  $$
\end{Remark}

\begin{proof}[Proof of Theorem \ref{thm:two-defs}]
  By Theorem \ref{thm:sympl-Fl}, for any semi-admissible Hamiltonian
  $H$ with slope $a$ there exists a natural isomorphism of vector
  spaces
$$
\HF^{\fa_H(\tau)}(H)\to \SH^\tau(\alpha)
$$
as long as $\tau \leq a$, and the function $\fa$ is bi-Lipschitz with
$1\leq \fa'_H\leq \rmax$. Hence, we also have isomorphisms for the
family of persistence modules
$$
\HF^{\fa_{sH}(\tau)}(sH)\to V(s)_\tau,
$$
where as above $V(s)$ is the persistence module
$V_s:=\SH^{as}(\alpha)=\HF(sH)$ truncated at $s$.  The
reparametrizations $\fa_{sH}$ are bi-Lipschitz uniformly in $s$:
$1\leq \fa'_{sH}\leq \rmax$ for all $s>0$ by \eqref{eq:fa-der}.

Therefore, similarly to \eqref{eq:reparametrization}, we have
$$
\fbdyn_{\eps}(sH)\geq \fb_\eps\big(V(s)\big)\geq
\fbdyn_{\rmax \eps}(sH).
$$
Passing to the upper limit as $s\to\infty$ and then to the limit as
$\eps\to 0$, we see that $\hbardyn(H)=\hbardyn(V)$. Now
\eqref{eq:two-defs} follows from Proposition \ref{prop:hbar-dyn-V} and
\eqref{eq:hbar-reparametrization}. The proof of the ``Furthermore''
part is nearly identical to its counterpart in the proof of
Proposition \ref{prop:hbar-dyn-V} and we omit it.
\end{proof}

\section{Crossing energy}
\label{sec:cross-energy+pf}
The key new ingredient of the proofs is the Crossing Energy Theorem
(Theorem \ref{thm:CE}). In this section we state the Crossing Energy
Theorem, which is proved in Section \ref{sec:cross}, and use it to
establish Theorem \ref{thm:B}.

\subsection{Crossing Energy Theorem}
\label{sec:CE}
Recall that a compact invariant set $K$ of the Reeb flow is said to be
\emph{locally maximal} or isolated if there exists a neighborhood
$U\supset K$ such that every invariant set contained in $U$ must be a
subset of $K$ or, equivalently, for every $x\in U\setminus K$ the
integral curve through $x$ is not entirely contained in $U$. We call
$U$ an \emph{isolating neighborhood}. For instance, a hyperbolic
periodic orbit is locally maximal. In general, a periodic orbit can be
isolated as a periodic orbit but not as an invariant set.  We refer
the reader to \cite[Sect.\ 17.4]{KH} or \cite{FH} for the definition
of a hyperbolic invariant set.

\begin{Theorem}[Crossing Energy Theorem, I]
  \label{thm:CE}
  Let $K$ be a hyperbolic, locally maximal compact invariant set of
  the Reeb flow of $\alpha$. Fix an interval
  $$
  I=[r_-,\, r_+]\subset (1,\,\rmax)
  $$
  and let $H(r,x)=h(r)$ be a semi-admissible Hamiltonian with
  $\slope(H)=:a \not\in\CS(\alpha)$ such that
\begin{equation}
  \label{eq:h'''1}
  h'''\geq 0 \textrm{ on } [1,\, r_++\delta]
\end{equation}
for some $\delta>0$ with $r_++\delta<\rmax$. Fix an admissible almost
complex structure $J$. Furthermore, let $z$ be a $T$-periodic orbit of
$\alpha$ in $K$ and $\tz:=(z, r^*)$ be the corresponding 1-periodic
orbit of the flow of $sH$. (Hence, $sa\geq T$ and $r^*$ depends on
$s$.) Assume that $r^*\in I$.

Then there exists $\sigma >0$ independent of $s$ and $z$ such that
$E(u) \geq \sigma$ for any Floer cylinder
$u \colon \R \times S^1 \to \WW$ of $sH$ asymptotic to $\tz$ at either
end, unless $E(u)=0$ and thus $u$ is trivial, i.e., independent of the
first coordinate.

\end{Theorem}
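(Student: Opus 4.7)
The plan is to argue by contradiction, adapting to admissible Hamiltonians on $\WW$ the strategy of the Crossing Energy Theorems in \cite{CGG:Entropy, GG:hyperbolic, CGGM}. Suppose no such $\sigma$ exists. Then we obtain sequences $s_n$, Reeb orbits $z_n \subset K$ with periods $T_n$, and Floer cylinders $u_n$ of $s_n H$ asymptotic to $\tz_n = (z_n, r^*_n)$ at some end, with $r^*_n \in I$ and $E(u_n) \to 0$. After passing to a subsequence, $r^*_n \to r^* \in I$.

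Step 1: confine $u_n$ to a thin shell around the level $\{r = r^*\}$. Writing $u_n = (w_n, \rho_n)$ on $M \times [1, \infty)$, let $r^\pm_n$ be the $r$-levels of the asymptotic orbits of $u_n$ at $\pm\infty$, one of which equals $r^*_n$. The maximum principle (Section \ref{sec:Floer-eq}) gives $r^-_n \leq \rho_n \leq r^+_n$, while \eqref{eq:Energy-Action} reads $A_{s_n h}(r^+_n) - A_{s_n h}(r^-_n) = E(u_n) \to 0$. The hypothesis $h''' \geq 0$ makes the function $A_h(r) = r h'(r) - h(r)$ strictly convex on $[1, r_+ + \delta]$ with a quantitative modulus, so $|r^+_n - r^-_n| \to 0$ and in particular $\sup |\rho_n - r^*_n| \to 0$. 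Thus $u_n$ eventually lies in an arbitrarily thin shell around the level $\{r = r^*\}$, on which $\varphi^t_{s_n H}$ is a reparametrization of the Reeb flow with time factor $\approx T_n$.

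Step 2: extract a pseudo-orbit and apply hyperbolicity. As in the pseudo-orbit observation of \cite[Sec.\ 4]{CGG:Entropy} together with the Reeb-adapted estimates of \cite[Sect.\ 3]{CGGM}, the combination of $E(u_n) \to 0$ with the shell confinement of Step 1 forces the circles $w_n(s, \cdot)$, reparametrized in Reeb time, to be $\varepsilon_n$-pseudo-orbits of $\varphi^t_\alpha$ on $M$ with $\varepsilon_n \to 0$, anchored (at the asymptotic end) near $z_n \subset K$. Fix an isolating neighborhood $U$ of $K$. By hyperbolicity, the shadowing lemma for hyperbolic invariant sets of flows (see \cite{KH, FH}) furnishes true Reeb orbits shadowing the $w_n(s, \cdot)$; by local maximality these shadowing orbits lie in $K$. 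A Gromov-type compactness argument then extracts a limiting restricted Floer cylinder on the single level $\{r = r^*\}$ whose image projects into $K$, and the Crossing Energy Theorem for an \emph{isolated} hyperbolic Reeb orbit from \cite{CGGM} — applicable because the limit projects to a hyperbolic periodic Reeb orbit in $K$ — yields a uniform lower bound on $E(u_n)$, contradicting $E(u_n) \to 0$.

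The main obstacle, as highlighted in the Introduction, is that the invariant set $\bigcup_r K \times \{r\} \subset \WW$ of $\varphi^t_{sH}$ is neither hyperbolic nor locally maximal: the $r$-direction is a neutral center. All previous Crossing Energy arguments relied on hyperbolicity of the ambient invariant set. The convexity hypothesis $h''' \geq 0$ exists precisely to neutralize this obstruction by pinning the $r$-coordinate of low-energy cylinders to a thin shell uniformly in the parameters, effectively collapsing the dynamics onto $M$ where the true hyperbolicity of $K$ is at our disposal. The most delicate point is obtaining all of these estimates uniformly as $s_n \to \infty$ and as $z_n$ ranges over Reeb orbits of potentially unbounded period in $K$ — a range of parameters not addressed in the isolated-orbit version of \cite{CGGM} — and this uniformity constitutes the technical heart of the argument.
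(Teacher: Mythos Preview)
Your overall architecture (contradiction, shell confinement, pseudo-orbits, shadowing) matches the paper's, but two of your steps contain genuine gaps.

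\textbf{Step 1: the lower bound on $r\circ u$ is not a maximum principle.} You assert that ``the maximum principle (Section~\ref{sec:Floer-eq}) gives $r^-_n \leq \rho_n \leq r^+_n$''. Only the upper bound $\rho_n\leq r^+_n$ follows from the maximum principle; there is no minimum principle for $r\circ u$, and a low-energy cylinder can in principle dip far below $r^-_n$, even into $W$. The paper obtains the lower bound via the Location Constraint Theorem (Theorem~\ref{thm:location}, proved in \cite{CGGM}), which it calls the ``central component'' of the argument. The hypothesis $h'''\geq 0$ is there precisely to make that theorem applicable, not to give the convexity of $A_h$ you invoke (monotonicity of $A_h$ already suffices to get $|r^+_n-r^-_n|\to 0$). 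Without the location constraint you cannot confine $u_n$ to a shell with $r$ bounded away from~$1$, and the reparametrization into Reeb pseudo-orbits in Step~2 (which divides by $h'(r(u))$) becomes uncontrolled.

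\textbf{Step 2: the endgame does not close.} Extracting ``a limiting restricted Floer cylinder on the single level $\{r=r^*\}$'' via Gromov compactness yields nothing: since $E(u_n)\to 0$, any limit is a trivial (constant-in-$s$) cylinder. Applying the single-orbit Crossing Energy Theorem of \cite{CGGM} to ``the limit'' therefore gives no lower bound on $E(u_n)$; moreover that theorem bounds the energy of cylinders asymptotic to a \emph{fixed} hyperbolic orbit, with a constant depending on that orbit, whereas your $z_n$ range over infinitely many orbits in $K$. The paper proceeds differently: it splits into two cases via Theorem~\ref{thm:CE2}. If $u_n$ exits $U\times[1,\infty)$, one finds a pseudo-orbit through a point of $\partial U$ contained in $\bar U$, whose limit is a genuine Reeb orbit in $\bar U\setminus K$, contradicting local maximality. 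If $u_n$ stays in $U\times[1,\infty)$, both asymptotic orbits $z_\pm$ lie in $K$ with $T_+>T_-$; shadowing produces a \emph{continuous} family $s\mapsto\hat\gamma_s$ of periodic orbits interpolating between $z_-$ and $z_+$, which by isolation of periodic orbits in $K$ must have constant image, forcing $z_-$ and $z_+$ to be simultaneously homotopic and distinct iterates of the same prime orbit --- a contradiction. Your sketch does not reach either of these contradictions.
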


This theorem is a partial generalization of \cite[Thm.\ 4.1]{CGGM}
where $K$ is just one locally maximal periodic orbit of the Reeb flow.
A remark is due regarding the statement of Theorem \ref{thm:CE} which
we will prove in Section \ref{sec:cross}.

\begin{Remark}
  \label{rmk:CE}
  The point that $\sigma$ is independent of $s$ is crucial for our
  purposes. Moreover, without it, for a fixed $s$ the theorem would
  follow immediately from a suitable variant of Gromov compactness
  theorem under no assumptions on $K$ other than that for every $T$
  all $T$-periodic orbits in $K$ are isolated.

  Secondly, fix $0<\sigma'<\sigma$ and $s\geq 0 $. Consider a
  $C^\infty$-small $s$-periodic in time, non-degenerate perturbation
  $\tH$ of $sH$ and a $C^\infty$-small compactly supported generic
  $s$-periodic perturbation $\tJ$ of $J$. The 1-periodic orbit $\tz$
  of $sH$ from Theorem \ref{thm:CE} splits into several non-degenerate
  periodic orbits of $\tH$ contained in a small tubular neighborhood
  of $\tz$. It follows again from a suitable version of the Gromov
  compactness theorem (see, e.g., \cite{Fi}) that every Floer cylinder
  of $\tH$ asymptotic to any of these orbits at either end has energy
  greater than $\sigma'$.
\end{Remark}

\subsection{Proof of Theorem \ref{thm:B}}
\label{sec:pf-thmB}
The proof comprises four steps. For the sake of simplicity, we will
assume that $\charr\F=2$. If $\charr\F\neq 2$, we would need to throw
away 
bad orbits (i.e., even iterates of the orbits with odd number of
eigenvalues in $(-1,0)$) from the periodic orbit count in Steps 3 and
4; see, e.g., \cite[Sect.\ 3]{CGG:ReebHZ}. This, however, does not
affect the exponential growth rate of the orbits, and ultimately the
result holds for every ground field.

\medskip\noindent\emph{Step 1: The Hamiltonian.} Throughout the proof,
we treat the barcode entropy of $\alpha$ in the sense of Theorem
\ref{thm:two-defs}. Thus, let $H(x,r)=h(r)$ be a semi-admissible
Hamiltonian.  Without loss of generality, we may assume that
$a:=\slope(H)=1$.

We will require in addition that \eqref{eq:h'''1} is satisfied and
$a=1\not\in\CS(\alpha)$ so that we can apply Theorem \ref{thm:CE} to
$H$. Furthermore, we can make $h'(r_+)<1$ arbitrarily close to $1$. To
be more precise, it is not hard to show that for any $\eta>0$, there
exists a semi-admissible Hamiltonian $H$ with $a=1$ such that
\eqref{eq:h'''1} holds and
$$
1-\eta\leq h'(r_+).
$$

Recall that $\fbdyn_\eps(sH)$ is the number of bars of length greater
than $\eps>0$ beginning below $s-\eps$ in the barcode of
$V_\tau:=\HF^\tau(sH)$; see Section \ref{sec:barcode-Floer}. By
Theorem \ref{thm:two-defs} and in particular \eqref{eq:two-defs}, we
have
$$
\hbar(\alpha)=\lim_{\eps\to 0^+}\limsup_{s\to\infty}\frac{\log^+
  \fbdyn_\eps(sH)}{s}.
$$
Here we may assume that $s\not\in \CS(\alpha)$, i.e.,
$\slope(sH)\not\in\CS(\alpha)$ since $a=1$, and hence $\HF(sH)$ is
defined directly, without passing to the limit.

\medskip\noindent\emph{Step 2: Reduction to locally maximal sets.}
Recall that $K$ is a compact hyperbolic invariant set of the Reeb flow
$\varphi^t_\alpha$. The goal of this step is to show that without loss
of generality we can assume that $K$ is locally maximal in addition to
being hyperbolic. (Here we closely follow the first step in the proofs
of \cite[Thm.\ B]{GGM} and \cite[Thm.\ B]{CGG:Entropy}.)  Note first
that, by the variational principle for topological entropy,
\cite[Cor.\ 4.3.9]{FH}, for every $\delta>0$ there exists an invariant
probability measure $\mu$ supported in $K$ such that
$h_\mu\geq \htop(K)-\delta$, where $h_\mu$ is the metric entropy of
the Reeb flow $\varphi^t_\alpha|_K$. Moreover, the measure $\mu$ can
be chosen ergodic; see \cite[Thm.\ 8.4]{Wa}. Then, by \cite[Thm.\
D']{LY} extending \cite[Thm.\ S.5.9.(1)]{KH} to flows, whenever
$h_\mu>0$, there exists a locally maximal hyperbolic set $K'$ contained
in a neighborhood of $\supp \mu$, and hence in a neighborhood of $K$,
such that
$$
\htop(K')\geq h_\mu-\delta\geq \htop(K)-2\delta.
$$
Since $\delta>0$ is arbitrary, replacing $K$ by $K'$ we may assume
that $K$ is locally maximal and hyperbolic. Now Theorem \ref{thm:CE}
can be applied to $H$ and $K$, which we will do in the last step of
the proof.

\medskip\noindent\emph{Step 3: Periodic orbits in $K$ with action
  constraint.}  Denote by $p(s)$ the number of periodic orbits of the
flow $\varphi^t_\alpha|_K$ with period $T\leq s$. By \cite[Thm.\
5.4.22]{FH}, we have
$$
\htop(K)=\limsup_{s\to\infty}\frac{\log^+ p(s)}{s}=:L
$$
since $K$ is hyperbolic. Furthermore, fix $r_-$ in the range
$(1,\, r^+)$ and set $I'=(r_-,\, r_+]$. Recall that in Theorem
\ref{thm:CE}, $I=[r_-,\, r_+]$, and hence $I'\subset I$.

Let $p^H(s)$ be the number of $s$-periodic orbits $\tz=(z,r)$ of the
flow $\varphi_H^t$ with $z$ in $K$ and $r\in I'$. Equivalently,
$p^H(s)$ is the number of non-constant 1-periodic orbits $\tz$ of the
flow of $sH$ with $r\in I'$. The goal of this step is to show that
\begin{equation}
  \label{eq:orbit_count}
  h'(r_+)\cdot L \leq
  \limsup_{s\to\infty}\frac{\log^+ p^H(s)}{s}\leq L.
\end{equation}
In fact, we will just need the inequality on the left. The inequality
on the right, which is nearly obvious, is included only for the sake
of completeness.

Note that since $a=1$ and $X_H= h'(r)R$, where $R$ is the Reeb vector
field, there is a one-to-one correspondence between closed Reeb orbits
$z$ in $K$ with period $T\leq s$ and $s$-periodic orbits $\tz=(z,r)$
of the flow of $H$. In particular, $p^H(s)\leq p(s)$ and the second
inequality follows.

Let $\tz=(z,r^*)$ be an $s$-periodic orbit of the flow of $H$ and $T$
be the period of $z$ as an orbit of the Reeb flow. Then, as in
\eqref{eq:level}, $T$ and $r^*$ are related by the condition
$$
s h'(r^*)= T.
$$
Set
$$
a_-:=h'(r_-)\textrm{ and }a_+:=h'(r_+).
$$
Then, for $r^*$ to be in $I'=(r_-,\, r_+]$ we must have
$$
a_- s\leq T < a_+ s,
$$
and thus
\begin{equation}
  \label{eq:pH}
p^H(s)=p(a_+s)-p(a_-s).
\end{equation}
(The reason that we took $I'$ to be a semi-open interval rather than
the closed interval $I$ is to ensure that this equality holds
literally.)

Clearly,
\begin{equation*}
  \begin{split}
    p(a_+s) &=\big(p(a_+s) -p(a_-s) \big)+p(a_-s)\\
    & \leq \max\big\{2\big(p(a_+s) -p(a_-s) \big),\, 2p(a_-s)\big\}.
  \end{split}
\end{equation*}

Therefore,
\begin{equation*}
  \begin{split}
    a_+\cdot L & =  \limsup_{s\to\infty}\frac{\log^+
      p(a_+ s)}{s}\\
    & \leq 
    \limsup_{s\to\infty}\frac{1}{s}\log^+\max\big\{2\big(p(a_+s)
    -p(a_-s) \big),\, 2p(a_-s)\big\}\\
    & =
    \limsup_{s\to\infty}\frac{1}{s}\max\big\{\log^+\big(2\big(p(a_+s)
    -p(a_-s) \big)\big),\, \log^+\big(2p(a_-s)\big)\big\}\\
    & =
    \max\left\{\limsup_{s\to\infty}\frac{\log^+\big(2\big(p(a_+s)
    -p(a_-s) \big)\big)}{s},\,
  \limsup_{s\to\infty}\frac{\log^+\big(2p(a_-s)\big)}{s}\right\}\\
& =
    \max\left\{\limsup_{s\to\infty}\frac{\log^+p^H(s)}{s},\,
      a_- \cdot L\right\}.
   \end{split}
\end{equation*}
Here, in the last equality, we have used \eqref{eq:pH}. The second
term in the last line is strictly smaller than $a_+ L$ since
$a_-<a_+$, and hence the first term must be greater than on equal to
$a_+ L$. This proves the first inequality in
\eqref{eq:orbit_count}.

\medskip\noindent\emph{Step 4: Punchline.} Let, as in Remark
\ref{rmk:CE}, $(G,\tJ)$ be a compactly supported, regular
$C^\infty$-small perturbation of the pair $(sH,J)$. Under this
perturbation, every non-constant non-degenerate $s$-periodic orbit
$\tz=(z,r^*)$ of the flow of $H$ splits into at least two 1-periodic
orbits of $G$. Furthermore, by Theorem \ref{thm:CE} and again Remark
\ref{rmk:CE}, whenever $r^*\in I=[r_-,\, r_+]$ a Floer cylinder
asymptotic to any of these orbits of $G$ at either end has energy
greater than some constant $\sigma'>0$ which is independent of $s$ and
$\tz$. (However, the upper bound on the size $\|sH-G\|_{C^\infty}$ of
the perturbation may depend on $s$.)  Note also that $a_+s<s-\eps$
when $s$ is sufficiently large, and hence all such orbits have action
below $s-\eps$.

Assume now that $2\eps<\sigma'$. It follows from, e.g., \cite[Prop.\
3.8]{CGG:Entropy} that the Floer persistence module $\HF^\tau(G)$ has
at least $p^H(s)/2$ bars of length greater than $2\eps$. Since $sH$
and $G$ are $C^\infty$-close, the same is true for the Floer
persistence module $\HF^\tau(sH)$ with $2\eps$ replaced by
$\eps$. (Here, the perturbation $G$ is chosen after $\eps$ is
fixed.). In other words,
$$
\fbdyn_\eps(sH)\geq p^H(s)/2 .
$$

By \eqref{eq:two-defs} and \eqref{eq:orbit_count},
\begin{equation*}
  \begin{split}
    \hbar(\alpha) &\geq
\limsup_{s\to\infty}\frac{\log^+
  \fbdyn_\eps(sH)}{s}\\ &\geq
\limsup_{s\to\infty}\frac{\log^+
  p^H(s)}{s}\\ &\geq h'(r_+)\cdot \htop(K) \\ &\geq (1-\eta)\htop(K).
\end{split}
\end{equation*}
Thus
$$
\hbar(\alpha)\geq (1-\eta)\htop(K).
$$
As was pointed out in Step 1, we can take $\eta>0$ arbitrarily close
to 0. It follows that
$$
\hbar(\alpha)\geq \htop(K),
$$
which concludes the proof of Theorem \ref{thm:B}.
\hfill\qed

\section{Proof of the Crossing Energy Theorem}
\label{sec:cross}
The goal of this section is to prove the Crossing Energy Theorem --
Theorem \ref{thm:CE}. The proof follows the same general line of
reasoning as several other arguments of this type. The key new
ingredient which makes the proof work for Reeb flows is a location
constraint theorem from \cite{CGGM}.  We state this result in the next
section.

\subsection{Refinement and location constraints}
\label{sec:location}
We start this section with a refinement of Theorem \ref{thm:CE}, which
better reflects the logical structure of the proof.

In what follows, the Hamiltonian $H$ is assumed to be semi-admissible
and $J$ is admissible. In particular, $H$ and $J$ are independent of
time. We will also assume that all Floer cylinders $u$ we consider
have sufficiently small energy, and hence, by monotonicity, are
contained $M\times (1-\eta,\,\infty)$ for some small $\eta>0$; see
Section \ref{sec:Floer-eq}. In particular, the projection of $u$ to
$M$ is defined.

\begin{Theorem}[Crossing Energy Theorem, II]
  \label{thm:CE2}
  Let $K$ be a compact invariant set of the Reeb flow. Fix an
  admissible almost complex structure $J$ and an interval
  $$
  I=[r_-,\, r_+]\subset (1,\,\rmax).
  $$
  Let $H(r,x)=h(r)$ be a semi-admissible Hamiltonian with slope
  $a\not\in\CS(\alpha)$ such that \eqref{eq:h'''1} is satisfied for
  some $\delta>0$ with $r_++\delta<\rmax$. Let $\tau>0$ and
  $u \colon \R \times S^1 \to \WW$ be a Floer cylinder for $\tau H$
  asymptotic, at either end, to a 1-periodic orbit in $K\times I$.
\begin{itemize}
\item[\reflb{CE-P1}{\rm{(i)}}] Assume that $K$ is locally maximal and
  let $U$ be an isolating neighborhood of $K$. Then
  \begin{equation}
    \label{eq:CE2}
  E(u)\geq \sigma
\end{equation}
for some constant $\sigma>0$ independent of $\tau$ and $u$, whenever
$u$ is not entirely contained in
$\hU:=U\times [1,\,\infty)\subset \WW$.

\item[\reflb{CE-P2}{\rm{(ii)}}] Assume that $K$ is hyperbolic and
  $U\setminus K$ contains no periodic orbits of the Reeb flow for some
  neighborhood $U$ of $K$.  Then, when $U$ is sufficiently small and
  $u$ is entirely contained in $\hU$, the energy lower bound
  \eqref{eq:CE2} holds again with $\sigma>0$ independent of $\tau$ and
  $u$.
\end{itemize}
\end{Theorem}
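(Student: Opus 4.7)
The plan is to argue both parts by contradiction in a single framework: assume a sequence of Floer cylinders $u_n$ for $\tau_n H$, each asymptotic at some end to a 1-periodic orbit in $K \times I$, with $E(u_n) \to 0$ but violating the conclusion of (i) or (ii) respectively. The goal in each case is to extract from $u_n$ (after $s$-recentering) a limiting object that cannot exist, invoking local maximality in case (i) and hyperbolicity in case (ii).

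A preliminary step shared by both parts is a uniform location bound. The maximum principle from Section \ref{sec:Floer-eq} bounds $r \circ u_n$ above by the $r$-coordinate of the $-\infty$ asymptote, and monotonicity prevents $u_n$ from entering $W$ more than $\eta$-deep when $E(u_n) < \eps(\eta)$. So after shrinking attention to $n$ large, every $u_n$ lies in a fixed compact shell $M \times [1-\eta,\, r_+ + \delta]$. Within this shell the convexity condition \eqref{eq:h'''1} applies, which is what allows one to invoke the location constraint theorem from \cite{CGGM}: low-energy Floer cylinders asymptotic to orbits at period $T = \tau h'(r^*)$ have their $M$-projections confined to a narrow tubular neighborhood of the Reeb orbit through $\tz$. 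This will be the crucial bridge between the $L^2$-energy of $u_n$ in $\WW$ and the qualitative dynamics of $\varphi^t_\alpha$ on $M$.

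For part (i), fix an isolating neighborhood $U$ of $K$ and suppose there are $s_n \in \R$ with $u_n(s_n, \cdot) \not\subset \hU$. Recenter $v_n(s,t) := u_n(s+s_n, t)$ and apply a Morse--Bott variant of Gromov compactness (see \cite{Bo, Fi}) to obtain a broken Floer trajectory as a limit. Since the total energy vanishes in the limit, every piece is $s$-independent, hence each piece is a circle of 1-periodic orbits $\tz(S^1)$ coming from a closed Reeb orbit. The exit condition forces at least one such underlying Reeb orbit to pass through $M \setminus U$; but the broken trajectory begins and ends at orbits in $K \times I$, and by the location constraint every intermediate piece has its underlying Reeb orbit entirely contained in a fixed small neighborhood of $K$. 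Local maximality of $K$ with isolating neighborhood $U$ then forces that Reeb orbit to lie in $K \subset U$, contradicting the assumed exit.

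For part (ii), $u_n$ stays inside $\hU$, and we exploit hyperbolicity of $K$. The strategy is to show that, for small $\eps_n > 0$ depending on $E(u_n)$, each circle $u_n(s, \cdot)$ projected to $M$ and reparametrized by Reeb time $T_n(s) = \tau_n h'(r)$ is an $\eps_n$-pseudo-orbit of $\varphi^t_\alpha$ contained in a small neighborhood of $K$. Since $U \setminus K$ carries no Reeb periodic orbits and $K$ is locally maximal hyperbolic, the Anosov shadowing lemma gives a unique shadowing orbit in $K$, which must be the asymptote. This shows $u_n(s, \cdot)$ is $C^0$-close, uniformly in $s$, to a fixed $s$-independent circle of 1-periodic orbits of $\tau_n H$; then a Morse--Bott nondegeneracy argument (using that the corresponding Reeb orbit is nondegenerate along transverse directions by hyperbolicity) forces $u_n$ itself to be this constant cylinder, contradicting $E(u_n) > 0$. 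The main obstacle is precisely this part: Morse--Bott degeneracy of the $\tz(S^1)$-orbits in $\WW$ is unavoidable, and the $K$-hyperbolicity lives on $M$, so the entire argument must be carried out after projection to $M$, with the location constraint of \cite{CGGM} and the convexity \eqref{eq:h'''1} providing the only reliable control on how far the $M$-projection can drift off the Reeb dynamics.
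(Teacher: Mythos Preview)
Your proposal has a foundational misreading of the location constraint, Theorem~\ref{thm:location}: it does \emph{not} confine the $M$-projection of $u$ to a tubular neighborhood of a Reeb orbit. It only controls the $r$-coordinate, pinning the image of $u$ to a shell $M\times(r_*^--\delta,\,r_*^++\delta)$. The payoff is a uniform lower bound $r\circ u\geq \rmin>1$, hence $h'(r\circ u)\geq h'(\rmin)>0$, which is exactly what lets one reparametrize each Floer circle $t\mapsto u(s,t)$ by Reeb time and obtain an $\eta$-pseudo-orbit of $\varphi_\alpha^t$ in $M$ with $\eta$ controlled by $E(u)^{1/4}$ via the pointwise bound $\|\p_s u\|\leq C_H E(u)^{1/4}$ (see~\eqref{eq:salamon}). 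Both parts of the paper's proof run through this pseudo-orbit construction, not through any statement about $M$-projections hugging a specific Reeb orbit.

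For Part~\ref{CE-P1}, your Gromov-compactness/broken-trajectory picture does not fit: when $\tau_n\to\infty$ the Hamiltonians $\tau_n H$ are unbounded (equivalently, in the $H^{\sharp\tau_n}$ model the time circles $S^1_{\tau_n}$ degenerate), so there is no broken Floer trajectory between $1$-periodic orbits to extract, and your claim that ``every intermediate piece has its underlying Reeb orbit in a small neighborhood of $K$'' rests on the mistaken reading of the location constraint. The paper instead argues directly: choose $s_0$ so that the Floer circle $u(s_0,\cdot)$ is still entirely in $\bar U$ but touches $\p U$; the reparametrized circle is an $\eta_k$-pseudo-orbit in $\bar U$ of length $T_k\to\infty$ through a point of $\p U$; a diagonal limit produces a genuine Reeb trajectory through $\p U$ contained in $\bar U$ for all time, contradicting that $\bar U$ is an isolating neighborhood.

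For Part~\ref{CE-P2}, the contradiction is not that $u$ is forced to be a constant cylinder. Rather, $E(u)>0$ and monotonicity of $A_h$ give $\rho_+>\rho_-$ for the $r$-levels of the two asymptotes, hence $T_+=\tau h'(\rho_+)>\tau h'(\rho_-)=T_-$ for their Reeb periods, so $z_+$ and $z_-$ are distinct closed Reeb orbits. Shadowing applied to the continuous family of pseudo-orbits $\gamma_s$ yields a continuous family of genuine closed orbits $\hgamma_s$ in $K$ interpolating between $z_-$ and $z_+$; since periodic orbits in $K$ are isolated, the underlying circles $\hgamma_s(\R)$ are all equal, forcing $z_+(\R)=z_-(\R)$ and $z_+$ homotopic to $z_-$ as loops on that circle. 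But $T_+>T_-$ makes $z_+$ a nontrivial multiple cover of $z_-$, which is the contradiction. Your Morse--Bott argument cannot conclude $u$ is trivial, because $u$ honestly connects two different orbits $(z_\pm,\rho_\pm)$ at different $r$-levels; closeness of the $M$-projections to a single Reeb circle does not prevent this.
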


Theorem \ref{thm:CE} readily follows from Theorem \ref{thm:CE2}, for
the requirements of both parts of the theorem are satisfied when $K$
is locally maximal and hyperbolic.  Part \ref{CE-P1} of Theorem
\ref{thm:CE2} is already sufficient for many purposes. For instance,
when $K$ comprises just one locally maximal periodic orbit, Part
\ref{CE-P2} is void and Part \ref{CE-P1} generalizes \cite[Thm.\
4.1]{CGGM}. For a fixed $\tau$, the lower bound, \eqref{eq:CE2}, is
again stable under small perturbations of $H$ and $J$ as in Remark
\ref{rmk:CE}. Note also that the parameter $s$ in $s H$ is renamed
here as $\tau$ since $s$ serves as the $\R$-coordinate in the domain
$\R\times S^1$ of a Floer cylinder $u$ in the proof of Theorem
\ref{thm:CE2}.

The central component of the proof of the Crossing Energy Theorem is
the following result which, under a minor additional condition on $H$,
along the lines of \eqref{eq:h'''1}, limits the range of $r\circ u$.

\begin{Theorem}[Location Constraint -- Thm.\ 6.1, \cite{CGGM}]
\label{thm:location}
Let $H(r,x) =h(r)$ be a semi-admissible Hamiltonian.  Assume that
$1<r_*^-\leq r_*^+$ and $\delta >0$ are such that
$$
1<r_*^--\delta \text{ and }
r_*^+ +\delta \leq \rmax,
$$
and
\begin{equation}
  \label{eq:h'''2}
  h''' \geq 0 \text{ on }
  [1, \, r_*^++\delta] \subset [1, \, \rmax).
\end{equation}
Fix an admissible almost complex structure $J$. Then there exists
$\sigma_0 >0$ such that for any $\tau >0 $ and any Floer cylinder
$u \colon \R \times S^1 \to \WW$ for $\tau H$ with energy
$E(u) \leq \sigma_0$ and asymptotic, at either end, to a periodic
orbit in $M\times [r_*^-,\,r_*^+]$, the image of $u$ is contained in
$M\times (r_*^--\delta,\, r_*^++\delta)$.
\end{Theorem}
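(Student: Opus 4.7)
The plan is to establish the Location Constraint Theorem as a refined maximum-principle argument for $r\circ u$, combined with a quantitative energy estimate that controls how far $r\circ u$ can deviate from the asymptotic band $[r_*^-,\, r_*^+]$. The upper bound $r\circ u < r_*^+ + \delta$ is essentially the standard symplectization maximum principle: for admissible $J$ and $h''\geq 0$, a direct computation using \eqref{eq:complex_strc} and the Floer equation \eqref{eq:floer_2} shows that $r\circ u$ is a weak subsolution of a linear elliptic equation on $\R\times S^1$. Hence $\sup(r\circ u)$ is attained at the asymptotic ends, where it equals an asymptotic $r$-value. At the constrained end this value lies in $[r_*^-,\,r_*^+]$, and at the opposite end it is bounded by the same quantity once $E(u)$ is small, using the action relation \eqref{eq:Energy-Action} and the monotonicity $r^+\geq r^-$.

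The real content of the theorem is the lower bound $r\circ u > r_*^- - \delta$. I would consider the open set
\begin{equation*}
\Omega \;:=\; \{(s,t)\in\R\times S^1 : r(u(s,t)) < r_*^- - \delta/2\},
\end{equation*}
which is a relatively compact proper subset of $\R\times S^1$ by the asymptotic condition at the constrained end, with $r\circ u \equiv r_*^- - \delta/2$ on $\partial\Omega$. Using $dr\circ J = -r\alpha$ and the Floer equation for $\tau h(r)$, a Bochner-type computation on $r\circ u$ gives a pointwise inequality schematically of the form
\begin{equation*}
\Delta(r\circ u) \;\geq\; c_1\,\tau\, r\, h''(r)\, e(u) \;-\; c_2\,\big(\tau\, r\, h'''(r)\big)\cdot(\text{nonnegative density}),
\end{equation*}
where $e(u)$ is the energy density and the second term is made harmless by the hypothesis $h'''\geq 0$. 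The resulting subharmonic inequality for $r\circ u$ on $\Omega$ is then integrated against a cutoff supported in $\Omega$; pairing the boundary values $r_*^- - \delta/2$ against the interior dip gives a bound of the form $(\text{depth of dip})^2 \leq C\cdot E(u)$, with $C$ depending only on $h$ and $\delta$ (this $\tau$-independence is the critical point). Choosing $\sigma_0$ below $(\delta/2)^2/C$ forces $\Omega = \emptyset$ for any cylinder with $E(u)\leq \sigma_0$.

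The main obstacle is arranging uniformity in $\tau$. Because $X_{\tau H}= \tau h'(r)R_\alpha$ scales linearly with $\tau$, the naive Bochner identity for $r\circ u$ picks up $\tau$-dependent and $\tau^2$-dependent error terms that threaten to defeat the estimate as $\tau\to\infty$. The hypothesis $h'''\geq 0$ is precisely what is needed to absorb those cross terms into the subharmonic inequality above, so that the energy threshold $\sigma_0$ depends only on $h|_{[1,\,r_*^+\!+\delta]}$ and on $\delta$. The argument is entirely local in the band $r\in[1,\,r_*^+ + \delta]\subset[1,\,\rmax)$ and never interacts with the linear region $r\geq \rmax$, which explains why the hypothesis on $h$ is imposed only on this interval. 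Once this uniform lower bound on $r\circ u$ is in place, combining it with the simpler upper bound completes the $C^0$-confinement claimed in Theorem~\ref{thm:location}.
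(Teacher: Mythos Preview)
The paper does not actually prove Theorem~\ref{thm:location}; immediately after the statement it says ``We refer the reader to \cite{CGGM} for the proof of the theorem.'' So there is no in-paper argument to compare your proposal against. The paper does confirm your high-level reading: the upper bound is standard (maximum principle), and ``the key non-trivial part of this theorem is the lower bound $r_*^--\delta$.''

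That said, what you have written is a plan rather than a proof, and the central step is not substantiated. Your ``Bochner-type computation'' is presented only schematically, and the schematic you wrote down does not do what you claim. In
\[
\Delta(r\circ u)\;\geq\; c_1\,\tau\, r\, h''(r)\, e(u)\;-\;c_2\big(\tau\, r\, h'''(r)\big)\cdot(\text{nonnegative density}),
\]
the hypothesis $h'''\geq 0$ makes the second summand \emph{nonpositive}, so it does not help you toward subharmonicity; and even if $r\circ u$ were subharmonic on $\Omega$, the maximum principle would control $\sup_\Omega(r\circ u)$, not the infimum, which is the wrong direction for the lower bound you want. You then invoke an integral estimate ``$(\text{depth})^2\leq C\cdot E(u)$'' without indicating which test function you pair against, how the boundary term produces the square of the depth, or---most critically---why the $\tau$ and $\tau^2$ factors in the first-order and zeroth-order terms cancel so that $C$ is independent of $\tau$. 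You correctly flag this uniformity as ``the main obstacle,'' but you do not actually resolve it; ``$h'''\geq 0$ is precisely what is needed'' is an assertion, not an argument.

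There is also a gap in your upper-bound paragraph: you say the unconstrained asymptotic $r$-value is ``bounded by the same quantity once $E(u)$ is small, using \eqref{eq:Energy-Action},'' but \eqref{eq:Energy-Action} relates $E(u)$ to $A_{\tau H}(r^+)-A_{\tau H}(r^-)=\tau\big(A_h(r^+)-A_h(r^-)\big)$, so small energy controls $r^+-r^-$ only with a $\tau$-dependent constant. You need a separate reason why this does not spoil the $\tau$-uniform upper bound (for the pure upper bound the maximum principle alone suffices without appealing to energy at all, so this detour is unnecessary and, as written, incorrect). In short: the outline is in the right spirit, but the actual identity for $\Delta(r\circ u)$ with the correct signs, and the mechanism by which $h'''\geq 0$ yields a $\tau$-independent lower barrier, are missing and must be supplied for this to be a proof.
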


In other words, a small energy Floer cylinder for $\tau H$ asymptotic
at either end to a periodic orbit in the shell
$M\times [r_*^-,\,r_*^+]$ must be entirely contained in a slightly
larger shell $M\times (r_*^--\delta,\, r_*^++\delta)$. The key
non-trivial part of this theorem is the lower bound $r_*^--\delta$ and
this is the part we will actually use. The upper bound is quite
standard and we included it only for the sake of completeness. We
refer the reader to \cite{CGGM} for the proof of the theorem.
(Strictly speaking Theorem \ref{thm:location} is proved there for
$\tau\in\N$. However, the argument carries over word-for-word to the
case of $\tau\in (0,\,\infty)$.)

\begin{Remark}
  As in Remark \ref{rmk:CE}, by the target compactness theorem from
  \cite{Fi}, the assertion of the theorem still holds with perhaps a
  smaller value of $\sigma_0$ when $sH$ and $J$ are replaced by their
  compactly supported, $\tau$-periodic in time $C^\infty$-small
  perturbations.  (The size of the perturbation may depend on $s$.) We
  also note that in Theorems \ref{thm:CE}, \ref{thm:CE2} and
  \ref{thm:location}, we could have required $H$ to be admissible
  rather than semi-admissible.
\end{Remark}  

\begin{Remark}
  The analogue of Theorem \ref{thm:CE} and Part \ref{CE-P2} of Theorem
  \ref{thm:CE2} for geodesic flows, \cite[Thm.\ D]{GGM}, holds when
  the hyperbolicity condition is replaced by a weaker requirement that
  $K$ is expansive; see, e.g., \cite[Def.\ 3.2.11]{KH} or \cite[Sec.\
  1.7 and 5.3]{FH} for the definition. It is conceivable that this is
  also true in the present setting although the proof from \cite{GGM}
  does not extend to general Reeb flows. However, as is easy to see
  from the proof below and \cite[Sec.\ 5.3]{FH}, in Part \ref{CE-P2}
  hyperbolicity can be replaced by expansivity and shadowing. See also
  \cite[Thm.\ 4.1]{CGGM}.
\end{Remark}  

\subsection{Energy bounds -- proof of Theorem \ref{thm:CE2}}
  \label{sec:CE2-pf}
  There are several sufficiently different approaches to the proof of
  crossing energy type results. Historically, the first one was based
  on target Gromov compactness, \cite{Fi}, and used in the proof of
  the original Crossing Energy Theorem in \cite{GG:hyperbolic} and
  then in \cite{GG:PR, CGG:Entropy} and more recently and in a more
  sophisticated form in \cite{CGP}. The second approach relies on the
  upper bound \eqref{eq:salamon} below from, e.g., \cite{Sa90, Sa},
  and on the conceptual level is also closely related to (the proof
  of) Gromov compactness. This method is pointed out in \cite[Rmk.\
  6.4]{CGG:Entropy} and then developed in \cite{CGGM}; see also
  \cite{Me:new}. The third approach is technically quite different. It
  uses finite-dimensional approximations and is fundamentally based on
  a ``classical'' form of Morse theory and the existence of the
  gradient flow; see \cite{Al, GGM}. This approach does not fit well
  in the general Floer theory setting, but either of the first two
  methods can be employed to prove Theorem \ref{thm:CE2}. Here,
  following \cite{CGGM}, we have chosen the second one, which is more
  hands-on and direct, albeit somewhat less general. The proof again
  comprises four steps.

  \medskip\noindent\emph{Step 1: From energy to $L^\infty$-upper
    bounds.}  Throughout the proof, it is convenient to adopt a
  different Hamiltonian iteration procedure. Namely, rather than
  looking at 1-periodic orbits of the Hamiltonian $\tau H$, we will
  look at the $\tau$-periodic orbits of $H$, changing the time range
  from $S^1=\R/\Z$ to $S^1_\tau=\R/\tau\Z$. We will refer to the
  resulting Hamiltonian as $H^{\sharp \tau}$; cf.\
  \cite{GG:hyperbolic, GG:PR}. This modification does not affect the
  Floer complexes, the action and the action filtration, the energy of
  a Floer trajectory, etc., with an isomorphism given by an
  $(s,t)$-reparametrization.

  With this in mind, by \cite[Sec.\ 1.5]{Sa} and \cite{Sa90}, there
  exist constants $C_H>0$ and $\sigma_H>0$ such that, for any Floer
  cylinder $u\colon \R\times S^1_\tau\to \WW$ for $H^{\sharp \tau}$
  with $E(u)<\sigma_H$, we have the point-wise upper bound
\begin{equation}
\label{eq:salamon}
\big\| \partial_s u(s, t) \big\| < C_H\cdot E(u)^{1/4},
\end{equation}
where $s$ is the coordinate on $\R$ and the norm on the left is
$L^\infty$; see also \cite{Br}.  The constants $\sigma_H$ and $C_H$
depend on $H$ via its first and second derivatives and $J$, but not on
$\tau$ or $u$.  (This is one instance where it is more convenient to
work with $H^{\sharp \tau}$ than $\tau H$; for the proof of
\eqref{eq:salamon} is local in the domain of $u$.)

Throughout the rest of the proof we will assume that $E(u)<\sigma_H$,
and hence $u$ satisfies \eqref{eq:salamon}.

\medskip\noindent\emph{Step 2: Pseudo-orbits.}  Recall that a map
$\gamma$ from an interval or a circle is an \emph{$\eta$-pseudo-orbit}
or just a \emph{pseudo-orbit} of the flow of $X$ if
$$
\big\|\dot{\gamma}(t)-X\big(\gamma(t)\big)\big\|<\eta
$$
for all $t$ in the domain of $\gamma$; see \cite[Def.\ 18.1.5]{KH} or
\cite{FH}. Then, when $\eta>0$ is small, $\gamma$ is close to the
integral curve of the flow through $x=\gamma(0)$. To be more precise,
as is easy to see, whenever a closed interval $\mathcal{I}$ in the
domain of $\gamma$ is fixed and $\eta>0$ is sufficiently small,
$\gamma|_{\mathcal{I}}$ is pointwise close to the integral curve of
the flow through $\gamma(0)$ on the same interval $\mathcal{I}$.

By \eqref{eq:salamon} and the Floer equation, \eqref{eq:floer_2},
Floer circles
$$
u(s)\colon t\mapsto u(s,t)
$$
are pseudo-orbits of the Hamiltonian flow of $H$ with
$\eta=C_H\cdot E(u)^{1/4}$, provided that $E(u)<\sigma_H$ as we have
assumed. Hence, when $E(u)$ is small, $u(s)$ approximates the integral
curve of the Hamiltonian flow through $u(0,0)$ over the interval
$[-\tau/2,\, \tau/2]$.

The goal of this step is to translate this fact from the Hamiltonian
flow of $H$ to the Reeb flow of $\alpha$ by suitably reparametrizing
$u(s)$.

Let $u$ be a Floer cylinder for $H^{\sharp \tau}$ with $E(u)<\sigma_H$
such that $\inf r(u)\geq \rmin$ for some constant $\rmin >1$. Note
that, by the maximum principle, $u$ is contained in the shell
$M\times [\rmin,\,\rmax]$. Set $\eps:=E(u)$ and $C:=C_H$. Then, by
\eqref{eq:salamon},
$$
\| \partial_s u-X_H \| < C\eps^{1/4}
$$
for all $s\in\R$, where $X_H=h' R_\alpha$. (This is another point
where it is more convenient to work with $H^{\sharp \tau}$ than with
$\tau H$; for $X_{H^{\sharp \tau}}=X_H$ is independent of $\tau$.)
Denote by $v$ the projection of $u$ to $M$. Then we also have
    \begin{equation}
      \label{eq:difference0}
  \big\| \p_t v - h'(r(u))R_\alpha \big\| < C\eps^{1/4}.
\end{equation}

Fixing $s\in\R$, let us reparametrize the map $t\mapsto v(s,t)$ by
using the change of variables $t=t(\xi)$ so that
$$ t'(\xi)= \frac{1}{h'\big(r(u(s,t))\big)},
  $$
  and set
  \begin{equation}
    \label{eq:gamma}
  \gamma(\xi)=\gamma_s(\xi):=v\big(s,t(\xi)\big).
\end{equation}
Then $\gamma$ is parametrized by the circle $S^1_T$ with 
\begin{equation}
  \label{eq:T}
    T\geq \tau \min_{t\in S^1_{\tau}}
    h'\big(r(u(0,t))\big) \geq \tau h'(\rmin).
\end{equation}

Furthermore, by \eqref{eq:difference0} and Theorem
\ref{thm:location},
\begin{equation}
  \label{eq:eta}
  \big\| \dot{\gamma}(\xi)- R_\alpha\big( \gamma(\xi) \big) \big\|
  < \frac{C\eps^{1/4}}{\min_{t\in S^1_\tau} h'\big(r(u(s,t))\big)}
  \leq \frac{C\eps^{1/4}}{h'(\rmin)}=:\eta,
\end{equation}
where the dot stands for the derivative with respect to $\xi$. (Note
that in this inequality we have used Theorem \ref{thm:location} in a
crucial way.) In other words, $\gamma$ is an $\eta$-pseudo-orbit of
the Reeb flow with $\eta$ completely determined by $\eps$ and other
auxiliary data, e.g., $H$, but independent of $s$ and $u$.

\medskip\noindent\emph{Step 3: Proof of Part \ref{CE-P1}.} Arguing by
contradiction, assume that there exists a sequence $\tau_k\to\infty$
and a sequence of Floer cylinders
$u_k\colon \R\times S^1_{\tau_k}\to \WW$ of $H^{\sharp \tau_k}$
satisfying the requirements of the theorem and such that
$$
\eps_k:= E(u_k)\to 0.
$$
In particular, $u_k$ is asymptotic to a $\tau_k$-periodic orbit
$\tz_k=(z_k,r_k^*)$ of the flow of $H$ where $z_k$ is a periodic orbit
of the Reeb flow in $K$ and $r^*_k\in I$. Moreover, $u_k$ is not
entirely contained in $\hU=U\times[1,\,\infty)$, where $U$ is an
isolating neighborhood of $K$. By shrinking $U$ if necessary, we can
guarantee that the closure $\bar{U}$ is a closed isolating
neighborhood, i.e, $K\subset U$ is a maximal invariant set in
$\bar{U}$, and that $u_k$ is not entirely contained in
$ \bar{U}\times [1,\, \infty)$.

Let $\delta>0$ be as in \eqref{eq:h'''1}. Setting $r_*^+=r_*^-=r_k^*$
in Theorem \ref{thm:location}, we have \eqref{eq:h'''2} satisfied.
Without loss of generality, we may assume that $\eps_k<\sigma_0$ and
hence that theorem applies to $u_k$. It follows that $u_k$ is
contained in the shell $M\times (r_--\delta,\, r_++\delta)$, where
$$
1<\rmin:=r_--\delta < r_++\delta<\rmax.
$$
Then $u_k$ is also contained in a bigger shell
$M\times [\rmin,\, \rmax]$ independent of $u_k$.

As in Steps 1 and 2, we assume that $\eps_k<\sigma_H$ and hence
\eqref{eq:salamon} holds. Finally, we may also require all $u_k$ to be
asymptotic to $\tz_k$ at the same end, say, $+\infty$. (For $-\infty$
the argument is identical.)

Suppressing $k$ in the notation, we set $u:=u_k$ and $\tau:=\tau_k$
and $\eps:=\eps_k$, etc.

The requirement that $u$ is not entirely contained in
$ \bar{U}\times [1,\, \infty)$ is equivalent to that $v$ is not
entirely contained in $\bar{U}$.  Since $u$ is asymptotic to $\tz$,
for some $s_0\in\R$ and $t_0\in S^1_{\tau}$, the curve
$v(s_0,S^1_{\tau})$ is contained in $\bar{U}$ while
$v(s_0,t_0)\in \p U:=\bar{U}\setminus U$. By translation and rotation
invariance of the Floer equation, without loss of generality, we may
assume that $s_0=0$ and $t_0=0$. Thus, $v(0,t)\in \bar{U}$ for all
$t\in S^1_{\tau}$ and $x:=v(0,0)\in \p U$.

Let $\gamma\colon S^1_T\to \bar{U}$ be defined by \eqref{eq:gamma}
with $s=s_0=0$ and $T$ satisfying \eqref{eq:T}. By \eqref{eq:eta},
$\gamma$ is an $\eta$-pseudo-orbit of the Reeb flow passing through
$x$.

Let us now reintroduce the subscript $k$ in the notation. To
summarize, we have found a sequence of $\eta_k$-pseudo-orbits
$\gamma_k\colon S^1_{T_k}\to \bar{U}$ of the Reeb flow with
$\eta_k\to 0$ by \eqref{eq:eta} and $T_k\to \infty$ by \eqref{eq:T}
and $x_k=\gamma_k(0)\in \p U$. We can view $\gamma_k$ as defined on
the interval $[-T_k/2,\, T_k/2]$ rather than on the circle
$S^1_{T_k}$. By passing if necessary to a subsequence and using the
diagonal process, we can ensure that $x_k\to x\in \p U$ and $\gamma_k$
converges uniformly on compact sets. Then the limit is the integral
curve $\xi\mapsto\varphi_\alpha^\xi(x)$, $\xi\in \R$, of the Reeb flow
passing through $x\in \p U\subset \bar{U}\setminus K$ and contained in
$\bar{U}$. This is impossible since $\bar{U}$ is a closed isolating
neighborhood of $K$.

\medskip\noindent\emph{Step 4: Proof of Part \ref{CE-P2}.} Let us fix
a neighborhood $U$ of $ K$ so small that the Shadowing Lemma applies
to pseudo-orbits in $U$; see, e.g., \cite[Thm.\ 18.1.]{KH} or
\cite[Sec.\ 5.3]{FH}.

As in Step 3, since $u$ is asymptotic at at least one end to a
periodic orbit in $K\times I$, it is entirely contained in
$U\times [\rmin,\, \rmax]$ whenever $\eps:=E(u)>0$ is sufficiently
small which we will require through the rest of the proof.  Thus $v$
is contained in $U$. Furthermore, $u$ is asymptotic to periodic orbits
$\tz_\pm=(z_\pm, \rho_\pm)$ of $H^{\sharp \tau}$ at $\pm\infty$ where
$z_\pm$ are periodic orbits of the Reeb flow in $K$ due to the
condition that all closed Reeb orbits of $\alpha$ in $U$ are contained
in $K$.  Let $T_\pm$ be the period of $z_\pm$.

Then, by \eqref{eq:Energy-Action},
$$
A_{\tau H}(\rho_+)-A_{\tau H}(\rho_-)=\eps>0,
$$
and hence $\rho_+>\rho_-$. At the same time, by \eqref{eq:level},
$$
\tau h'(\rho_\pm)= T_\pm.
$$
Therefore, $T_+>T_-$ because $h'$ is strictly increasing on $I$. It
follows that $z_\pm$, up to the initial condition, are distinct as
periodic orbits of the Reeb flow. In other words, we have an
alternative: either $z_\pm$ are geometrically distinct, i.e.,
$z_+(\R)\neq z_-(\R)$, or $z_+$ is a multiple cover of $z_-$.

Let $\gamma_s$ be as in \eqref{eq:gamma}. By \eqref{eq:eta},
$\gamma_s$ is an $\eta$-pseudo-orbit with
$\eta=C\eps^{1/4}/h'(\rmin)$. Furthermore, as $s\to \pm\infty$, the
loops $\gamma_s$ converge to $z_\pm$.  Due to the Shadowing Lemma,
when $\eps$ is sufficiently small, depending only on $U$ and the Reeb
flow but not $u$, for every $s$ there is a periodic orbit $\hgamma_s$
of the Reeb flow shadowing $\gamma_s$. This periodic orbit is unique,
depends continuously on $s$ and converges to $z_\pm$ as
$s\to\pm\infty$. In particular, the image $\Gamma_s:=\hgamma_s(\R)$
depends continuously on $s$ in Hausdorff topology and converges to
$z_+(\R)$ and $z_-(\R)$ as $s\to\pm\infty$.  Periodic orbits in $K$
are isolated and, therefore, we must have
$$
\Gamma_s=z_+(\R)=z_-(\R)
$$
for all $s$. Furthermore, $z_-$ is homotopic to $z_+$ in the circle
$z_+(\R)=z_-(\R)$. This is, however, impossible since at the same time
$z_+$ must then be a multiple cover of $z_-$ by the above alternative.

This contradiction concludes the proof of Theorem
\ref{thm:CE}. \hfill\qed


\begin{thebibliography}{CGG22b}

\bibitem{AASS}
  A. Abbondandolo, M.R.R. Alves, M. Sa\u{g}lam, F. Schlenk, Entropy
  collapse versus entropy rigidity for Reeb and Finsler flows,
  \emph{Selecta Math.\ (NS)} \textbf{29} (2023), Article No.\ 67.
  
\bibitem{AS}
  A. Abbondandolo, M. Schwarz, On the Floer homology of cotangent
  bundles, \emph{Comm.\ Pure Appl.\ Math.}, \textbf{59} (2006),
  254--316.  
    
\bibitem{Al}
  S. Allais, On periodic points of Hamiltonian diffeomorphisms of
  $\CP^d$ via generating functions, \emph{J. Symplectic Geom.},
  \textbf{20} (2022), 1–48. 

\bibitem{Al16}
  M.R.R. Alves, Cylindrical contact homology and topological entropy,
  \emph{Geom.\ Topol.}, \textbf{20} (2016), 3519--35692.

\bibitem{Al19}
  M.R.R. Alves, Legendrian contact homology and topological entropy,
  \emph{J. Topol.\ Anal.}, \textbf{11} (2019), 53-108. 

\bibitem{ACH}
  M.R.R. Alves, V. Colin, K. Honda, Topological entropy for Reeb
  vector fields in dimension three via open book decompositions,
  \emph{J.\ \'Ec.\ polytech.\ Math.}, \textbf{6} (2019), 119--148.

\bibitem{ADMM}
  M.R.R. Alves, L. Dahinden, M. Meiwes, L. Merlin, $C^0$-Robustness of
  topological entropy for geodesic flows, \emph{J.\ Fixed Point Theory
    Appl., Claude Viterbo’s 60th Birthday Festschrift.}, \textbf{24}
  (2022), doi:10.1007/s11784-022-00959-4.

\bibitem{ADMP}
  M.R.R. Alves, L. Dahinden, M. Meiwes, A. Pirnapasov, $C^0$-stability
  of topological entropy for Reeb flows in dimension 3, Preprint
  arXiv:2311.12001.
  
  
\bibitem{AM}
  M.R.R. Alves, M. Meiwes, Dynamically exotic contact spheres in
  dimensions $\geq 7$, \emph{Comment.\ Math.\ Helv.}, \textbf{94}
  (2019), 569--622. 
  
\bibitem{AP}
  M.R.R. Alves, A. Pirnapasov, Reeb orbits that force topological
  entropy, \emph{Ergodic Theory Dynam.\ Systems}, \textbf{42} (2022),
  3025--3068. 
  
  
\bibitem{As}  
  M. Asaoka, Abundance of fast growth of the number of periodic points
  in 2-dimensional area-preserving dynamics, \emph{Comm.\ Math.\
    Phys.}, \textbf{356} (2017), 1--17. 

\bibitem{ACW}  
  A. Avila, S. Crovisier, A. Wilkinson, $C^1$ density of stable
  ergodicity, \emph{Adv.\ Math.}, \textbf{379} (2021), Paper No.\
  107496, 68 pp.

\bibitem{BG}
  E. Barut, V.L. Ginzburg, Barcode growth for toric-integrable
  Hamiltonian systems, Preprint arXiv:2503.08922.

  
\bibitem{Ba1}  
  M. Bator\'eo, On hyperbolic points and periodic orbits of
  symplectomorphisms, \emph{J. Lond.\ Math.\ Soc.\ (2)}, \textbf{91}
  (2015), 249--265. 

 \bibitem{Ba2} 
   M. Bator\'eo On non-contractible hyperbolic periodic orbits and
   periodic points of symplectomorphisms, \emph{J. Symplectic Geom.},
   \textbf{15} (2017), 687--717.  

\bibitem{Bo}   
  F. Bourgeois, A Morse--Bott approach to contact homology, In
  \emph{Symplectic and contact topology: interactions and perspectives
    (Toronto, ON/Montreal, QC, 2001)}, Fields Inst.\ Commum., AMS,
  \textbf{35}, (2003), 55--77.

\bibitem{Br}  
  B. Bramham, Pseudo-rotations with sufficiently Liouvillean rotation
  number are $C^0$-rigid, \emph{Invent.\ Math.}, \textbf{199} (2015),
  561--580.  

\bibitem{BV}  
  P. Bubenik, T.Vergili, Topological spaces of persistence modules and
  their properties, \emph{J. Appl.\ Comput.\ Topol.}, \textbf{2}
  (2018), 233--269. 

\bibitem{CZCG}  
  G. Carlsson, A. Zomorodian, A. Collins, L. Guibas, Persistence
  barcodes for shapes, \emph{Int.\ J. Shape Model.}, \textbf{11}
  (2005), 149--187.

\bibitem{CFH}  
  K. Cieliebak, A. Floer, H. Hofer, Symplectic homology II: A general
  construction, \emph{Math.\ Zeit.}, \textbf{218} (1995), 103--122.

\bibitem{Ci:counterexample}  
  E. \c Cineli, A generalized pseudo-rotation with positive
  topological entropy, \emph{Bull.\ Lond.\ Math.\ Soc.}, 2025, doi:
  10.1112/blms.70021.

  
\bibitem{CGG:Entropy}  
  E. \c Cineli, V.L. Ginzburg, B.Z. G\"urel, Topological entropy of
  Hamiltonian diffeomorphisms: a persistence homology and Floer theory
  perspective, 
  \emph{Math.\ Z.}, 
  \textbf{73} (2024), doi: 10.1007/s00209-024-03627-0.
  
\bibitem{CGG:Growth}  
  E. \c Cineli, V.L. Ginzburg, B.Z. G\"urel, On the growth of the
  Floer barcode, 
  \emph{J.\ Mod.\ Dyn.}, \textbf{20} (2024), 275--298, doi:
  10.3934/jmd.2024007.
  

\bibitem{CGG:Spectral}  
  E. \c Cineli, V.L. Ginzburg, B.Z. G\"urel, On the generic behavior
  of the spectral norm, 
  \emph{Pacific J.\ Math.}, \textbf{328} (2024), no.\ 1, 119--135,
  doi: 10.2140/pjm.2024.328-119.


\bibitem{CGG:ReebHZ}
  E. \c Cineli, V.L. Ginzburg, B.Z. G\"urel, Closed orbits of
  dynamically convex Reeb flows: Towards the HZ- and multiplicity
  conjectures, Preprint arXiv:2410.13093.
  

\bibitem{CGGM}  
  E. \c Cineli, V.L. Ginzburg, B.Z. G\"urel, M. Mazzucchelli,
  Invariant sets and hyperbolic closed Reeb orbits, Preprint
  arXiv:2309.04576.

  
\bibitem{CB}  
  W. Crawley-Boevey, Decomposition of pointwise finite-dimensional
  persistence modules, \emph{J. Algebra Appl.}, \textbf{14} (2015),
  no.\ 5, 1550066, 8 pp.

\bibitem{CGP}  
  D. Cristofaro-Gardiner, R. Prasad, Low-action holomorphic curves and
  invariant sets, Preprint arXiv:2401.14445.
  
\bibitem{Di}  
  E.I. Dinaburg, A connection between various entropy
  characterizations of dynamical systems, \emph{Izv.\ Akad.\ Nauk SSSR
    Ser.\ Mat.}, \textbf{35} (1971), 324--366. 

\bibitem{FLS}
  E. Fender, S. Lee, B. Sohn, Barcode entropy for Reeb flows on
  contact manifolds with exact Liouville fillings, \emph{Comm.\
    Contemp.\ Math.}, 2025, doi.org/10.1142/S0219199725500440.

\bibitem{Fe24} 
  R. Fernandes, Barcode entropy and wrapped Floer homology, Preprint
  arXiv:2410.05528.

\bibitem{Fe25} 
  R. Fernandes, Wrapped Floer homology and hyperbolic sets, Preprint
  arXiv:2501.06654.
  
\bibitem{Fi}  
  J.W. Fish, Target-local Gromov compactness, \emph{Geom.\ Topol.},
  \textbf{15} (2011), 765--826.

\bibitem{FH}  
  T. Fisher, B. Hasselblatt, \emph{Hyperbolic Flows}, Zurich Lectures
  in Advanced Mathematics, European Mathematical Society, Berlin,
  2019.

\bibitem{FS}  
  U. Frauenfelder, F. Schlenk, Hamiltonian dynamics on convex
  symplectic manifolds, \emph{Israel J.\ Math.}, \textbf{15} (2006),
  1--56.

\bibitem{GG:hyperbolic}  
  V.L. Ginzburg, B.Z. G\"urel, Hyperbolic fixed points and periodic
  orbits of Hamiltonian diffeomorphisms, \emph{Duke Math. J.},
  \textbf{163} (2014), 565--590.

\bibitem{GG:nc}  
  V.L. Ginzburg, B.Z. G\"urel, Non-contractible periodic orbits in
  Hamiltonian dynamics on closed symplectic manifolds, \emph{Compos.\
    Math.}, \textbf{152} (2016), 1777--1799.

\bibitem{GG:PR}  
  V.L. Ginzburg, B.Z. G\"urel, Hamiltonian pseudo-rotations of
  projective spaces, \emph{Invent.\ Math.}, \textbf{214} (2018),
  1081--1130.

\bibitem{GG:LS}  
  V.L. Ginzburg, B.Z. G\"urel, Lusternik--Schnirelmann theory and
  closed Reeb orbits, \emph{Math.\ Z.}, \textbf{295} (2020),
  515--582. 

\bibitem{GGM}  
  V.L. Ginzburg, B.Z. G\"urel, M. Mazzucchelli, Barcode entropy of
  geodesic flows, 
  \emph{J.\ Eur.\ Math.\ Soc. (JEMS)}, doi: 10.4171/JEMS/1572.

  
\bibitem{Kal}  
  V. Kaloshin, An extension of the Artin-Mazur theorem, \emph{Ann.\ of
    Math.} (2), \textbf{150} (1999), 729--741. 

\bibitem{Ka}  
  A. Katok, Lyapunov exponents, entropy and periodic orbits for
  diffeomorphisms, \emph{Inst.\ Hautes \'Etudes Sci.\ Publ.\ Math.},
  \textbf{51} (1980), 137--173.

\bibitem{Ka82}
  A. Katok, Entropy and closed geodesics, \emph{Ergodic Theory Dynam.\
    Systems}, \textbf{2} (1982), 339--365. 

\bibitem{KH}
  A. Katok, B. Hasselblatt, \emph{Introduction to the Modern Theory of
    Dynamical Systems.} With a supplementary chapter by A. Katok and
  Mendoza. Encyclopedia of Mathematics and its Applications,
  54. Cambridge University Press, Cambridge, 1995.

\bibitem{LY}  
  Z. Lian, L.-S. Young, Lyapunov exponents, periodic orbits, and
  horseshoes for semiflows on Hilbert spaces, \emph{J. Amer.\ Math.\
    Soc.}, \textbf{25} (2012), 637--665.  

\bibitem{LS}   
  Y. Lima, O.M. Sarig, Symbolic dynamics for three-dimensional flows
  with positive topological entropy,\emph{ J. Eur.\ Math.\ Soc.\
    (JEMS)}, \textbf{21} (2019), 199--256. 

\bibitem{MS}  
  L. Macarini, F. Schlenk, Positive topological entropy of Reeb flows
  on spherizations, \emph{Math.\ Proc.\ Cambridge Philos.\ Soc.},
  \textbf{151} (2011), 103--128. 

\bibitem{Me}  
  M. Meiwes, \emph{Rabinowitz Floer Homology, Leafwise Intersections,
    and Topological Entropy}, Inaugural Dissertation zur Erlangung der
  Doktorw\"urde der Naturwissenschaftlich -- Mathematischen
  Gesamtfakultät der Ruprecht -- Karls -- Universit\"at Heidelberg,
  2018; available at http://www.ub.uni-heidelberg.de/archiv/24153.

\bibitem{Me:new}  
  M. Meiwes, On the barcode entropy of Lagrangian submanifolds,
  Preprint arXiv:2401.07034.

\bibitem{Pa}   
  G.P. Paternain, \emph{Geodesic Flows}, Progress in Mathematics,
  vol.\ 180, Birkha\"user Boston, Inc., Boston, MA, 1999. 

\bibitem{PRSZ}  
  \emph{Topological Persistence in Geometry and Analysis}, University
  Lecture Series, vol.\ 74, Amer.\ Math.\ Soc., Providence, RI, 2020.

\bibitem{Sa90}  
  D.A. Salamon, Morse theory, the Conley index and Floer homology,
  \emph{Bull.\ Lond.\ Math.\ Soc.}, \textbf{22} (1990), 113--140.

\bibitem{Sa}  
  D.A. Salamon, Lectures on Floer homology. In \emph{Symplectic
    Geometry and Topology}, IAS/Park City Math.\ Ser., vol.\ 7, Amer.\
  Math.\ Soc., Providence, RI, 1999, 143--229.

\bibitem{SW}  
  D.A. Salamon, J. Weber, Floer homology and the heat flow,
  \emph{Geom.\ Funct.\ Anal.}, \textbf{16} (2006),
  1050--1138. 

\bibitem{Si}  
  J.-C. Sikorav, Some properties of holomorphic curves in almost
  complex manifolds, in \emph{Holomorphic curves in symplectic
    geometry}, pp. 165--189; Progr.\ Math., \textbf{117} Birkh\"auser
  Verlag, Basel, 1994.

\bibitem{Vi}  
  C. Viterbo, Functors and computations in Floer cohomology, I,
  \emph{Geom.\ Funct.\ Anal.}, \textbf{9} (1999), 985--1033.

\bibitem{Wa}  
  P. Walters, \emph{An Introduction to Ergodic Theory}, Graduate Texts
  in Mathematics, vol.\ 79, Springer-Verlag, New York-Berlin, 1982.

\bibitem{ZC}  
  A. Zomorodian, G. Carlsson, Computing persistent homology,
  \emph{Discrete Comput.\ Geom.}, \textbf{33} (2005), 249--274. 

  
\end{thebibliography}
\end{document}